\documentclass[english]{article}
\usepackage[T1]{fontenc}
\usepackage[latin9]{inputenc}
\usepackage{geometry}
\geometry{verbose,tmargin=3.5cm,bmargin=3.5cm,lmargin=2.5cm,rmargin=2.5cm}
\usepackage{refstyle}
\usepackage{enumitem}
\usepackage{amsmath}
\usepackage{amsthm}
\usepackage{amssymb}
\usepackage{graphicx}
\usepackage{setspace}
\usepackage{nicefrac}
\usepackage{authblk}
\usepackage{units}
\usepackage{esint}

\onehalfspacing

\makeatletter


\AtBeginDocument{\providecommand\secref[1]{\ref{sec:#1}}}
\AtBeginDocument{\providecommand\eqref[1]{\ref{eq:#1}}}
\AtBeginDocument{\providecommand\figref[1]{\ref{fig:#1}}}
\AtBeginDocument{\providecommand\propref[1]{\ref{prop:#1}}}
\AtBeginDocument{\providecommand\claimref[1]{\ref{claim:#1}}}
\AtBeginDocument{\providecommand\thmref[1]{\ref{thm:#1}}}
\AtBeginDocument{\providecommand\remref[1]{\ref{rem:#1}}}
\RS@ifundefined{subref}
  {\def\RSsubtxt{section~}\newref{sub}{name = \RSsubtxt}}
  {}
\RS@ifundefined{thmref}
  {\def\RSthmtxt{theorem~}\newref{thm}{name = \RSthmtxt}}
  {}
\RS@ifundefined{lemref}
  {\def\RSlemtxt{lemma~}\newref{lem}{name = \RSlemtxt}}
  {}

\numberwithin{equation}{section}
\numberwithin{figure}{section}
  \theoremstyle{plain}
  \newtheorem{prop}{\protect\propositionname}
  \theoremstyle{plain}
  \newtheorem{fact}{\protect\factname}
  \theoremstyle{definition}
  \newtheorem{defn}{\protect\definitionname}
\theoremstyle{plain}
\newtheorem{thm}{\protect\theoremname}
\theoremstyle{plain}
\newtheorem{lem}[thm]{\protect\lemmaname}
 \newlist{casenv}{enumerate}{4}
 \setlist[casenv]{leftmargin=*,align=left,widest={iiii}}
 \setlist[casenv,1]{label={{\itshape\ \casename} \arabic*.},ref=\arabic*}
 \setlist[casenv,2]{label={{\itshape\ \casename} \roman*.},ref=\roman*}
 \setlist[casenv,3]{label={{\itshape\ \casename\ \alph*.}},ref=\alph*}
 \setlist[casenv,4]{label={{\itshape\ \casename} \arabic*.},ref=\arabic*}
  \theoremstyle{remark}
  \newtheorem{rem}{\protect\remarkname}
  \theoremstyle{remark}
  \newtheorem{claim}{\protect\claimname}
  \theoremstyle{plain}
  \newtheorem{conjecture}{\protect\conjecturename}

\usepackage{dsfont}
\newref{rem}{refcmd={Remark \ref{#1}}}
\newref{prop}{refcmd={Proposition \ref{#1}}}
\newref{claim}{refcmd={Claim \ref{#1}}}
\newref{cor}{refcmd={Corollary \ref{#1}}}
\newref{sec}{refcmd={Section \ref{#1}}}
\newref{eq}{refcmd={equation \ref{#1}}}
\newref{fig}{refcmd={Figure \ref{#1}}}
\newref{thm}{refcmd={Theorem \ref{#1}}}

\theoremstyle{plain}
\newtheorem{assumption}{Assumption}

\date{}

\makeatother

\usepackage{babel}
  \providecommand{\claimname}{Claim}
  \providecommand{\conjecturename}{Conjecture}
  \providecommand{\definitionname}{Definition}
  \providecommand{\factname}{Fact}
  \providecommand{\propositionname}{Proposition}
  \providecommand{\remarkname}{Remark}
  \providecommand{\lemmaname}{Lemma}
 \providecommand{\casename}{Case}
\providecommand{\theoremname}{Theorem}

\title{Metastable Behavior of Bootstrap Percolation on Galton-Watson Trees}
\author[]{Assaf Shapira\thanks{I acknowledge the support of the ERC Starting Grant 680275 MALIG}}

\begin{document}
\global\long\def\unity{\mathds{1}}

\maketitle
\begin{abstract}
Continuing the study of \cite{bollobas2014bootstrapongwcriticality}
on the critical probability of the bootstrap percolation on Galton-Watson
trees, we analyze the metastable states near criticality. We find
that, depending on the exact choice of the offspring distribution,
it is possible to have several distinct metastable states, with varying
scaling of their duration while approaching criticality.
\end{abstract}

\section{Introduction}

Bootstrap percolation is a deterministic dynamics in discrete time first introduced in \cite{chalupa1979bootstrapbethelattice} in order to model disordered magnetic systems, and broadly studied since in many different contexts. Fix a graph
$G$ and a parameter $r\in\mathbb{N}$. Each vertex of the graph can
be in one of two states -- infected or healthy, which are initially
distributed independently with probabilities $p$ and $q=1-p$. At
each time step we update these states, such that the infected vertices
remain infected, and a healthy vertex becomes infected if it has at
least $r$ infected neighbors. One may also consider more general
infection conditions, such as the oriented bootstrap percolation --
when the graph $G$ is oriented, and we require at least $r$ edges
to point at infected vertices. 

Bootstrap percolation on various deterministic graphs has been the
subject of extensive research. On the grid $\left[n\right]^{d}$,
the probability that all vertices are eventually infected, as a function
of $p$ (or equivalently $q$), has been profoundly studied in \cite{aizenman1988metastabilityz2,holroyd2003sharp,balogh2012sharpzd}.
For $\left(d+1\right)$-regular infinite trees, with $2\le r \le d$, it is shown in \cite{balogh2006bootstrapregulartrees}
that a phase transition occurs. Defining $q_{c}$ to be the supremum
over all $q$ such that starting with probability $q$ to be healthy
all vertices end up being infected with probability $1$, an explicit
expression for $q_{c}$ is found, and it is furthermore proven that
\textbf{$q_{c}$} lies in the open interval $\left(0,1\right)$. In
addition, it is determined, depending on $d$ and on $r$, when the
transition is continuous and when it is discontinuous. In \cite{biskup2009metastableregulartrees}
the details of the metastability properties are studied, describing
the time evolution of the probability that the root stays healthy
near criticality.

Random environments have also been of interest in this field, e.g.,
the bootstrap percolation on a polluted grid \cite{gravner1997polluted,gravner2017polluted},
the random graph $G_{n,p}$ \cite{janson2012Gnp}, the random regular
graph \cite{balogh2007randomregular,janson2009percolationexplosion},
and the Galton-Watson tree \cite{bollobas2014bootstrapongwcriticality}.

In this paper, we will analyze the metastability of the bootstrap
percolation on a directed Galton-Watson tree, i.e., the time behavior
near criticality of the probability that the root is infected. In
\secref{Prevalence} we present an interpretation of this probability
as the almost sure prevalence -- the limiting ratio of infected vertices.
In \secref{Critical-Behavior} we will study the zoology of the metastabilities
for different offspring distributions, showing that this model introduces
a vast variety of possible behaviors. Finally, in \secref{othertransitions}
we comment on other phase transitions that may occur.

\section{Model and Notations}

Fix an infection threshold $r\ge2$, and consider a Galton-Watson
tree $G$ whose offspring distribution is supported on $r,r+1,\dots$
That is, defining $\xi_{k}$ to be the probability that a vertex has
$k$ children, we require $\xi_{k}=0$ for $k<r$.

In the beginning, we decide for each vertex of $G$ whether it is
infected or healthy, independently with probabilities $p$ and $q=1-p$
respectively. Then, at each time step $t$, a healthy vertex will
get infected if it has at least $r$ infected children. Let us denote
by $\phi_{t}^{G}$ the (random) probability that the root is healthy at time
$t$, so in particular $\phi_{0}^{G}=q$. Note also that $\phi_{t}^{G}$
is decreasing in $t$. The expected value of $\phi_{t}^{G}$ over
all graphs $G$, generated with offspring distribution $\xi$, will
be denoted $\phi_{t}^{\xi}$.

One particular case, that has been studied in \cite{balogh2006bootstrapregulartrees,biskup2009metastableregulartrees,fontesschonmann2008bootstraptreesecondtransition,chalupa1979bootstrapbethelattice},
is the case of a rooted $\left(d+1\right)$-regular tree, i.e., $\xi_{k}=\unity_{k=d}$.
Here, one can find $\phi_{t}^{d}$ recursively using the relation
\begin{eqnarray}
\phi_{t+1}^{d} & = & h_{d}\left(\phi_{t}^{d}\right);\label{eq:recursionregular}\\
h_{d}\left(x\right) & = & q\mathbb{P}\left[\mbox{Bin}\left(d,1-x\right)\le r-1\right].\label{eq:defhregular}
\end{eqnarray}
For the GW tree, such a recursion still holds for the expected value
$\phi_{t}^{\xi}$:
\begin{eqnarray}
\phi_{t+1}^{\xi} & = & h_{\xi}\left(\phi_{t}^{\xi}\right);\label{eq:recursion}\\
h_{\xi}\left(x\right) & = & \sum_{k=r}^{\infty}\xi_k h_{k}(x).\label{eq:defh}
\end{eqnarray}

\section{Results}

\subsection{\label{sec:Prevalence}Prevalence and $\phi_{t}$}

The relation in \eqref{recursion} allows us to find the expected
value of $\phi_{t}^{G}$, but for a specific realization of $G$,
$\phi_{t}^{G}$ may differ from that value. For example, fixing $t$,
there is a nonzero probability that a finite neighborhood of the root
will have many vertices of high degree, which will result in a smaller
$\phi_{t}^{G}$. However, we will see that $\phi_{t}^{\xi}$ describes
almost surely another observable -- the prevalence, i.e., the limiting
fraction of infected vertices.

First, denote by $B(R)$ the ball of radius $R$ around the root.
We can then define the $R$-prevalence at time $t$ as
\[
\rho_{R}\left(t\right)=\frac{\left|\left\{ \mbox{infected vertices in }B(R)\mbox{ at time }t\right\} \right|}{\left|B(R)\right|}.
\]

It is natural to expect $\rho_{R}\left(t\right)$ to be close to $1-\phi_{t}^{\xi}$,
and this is indeed the case, as shown in the following proposition:
\begin{prop}
\label{prop:prevalence}Fix $t$. Then $\lim_{R\rightarrow\infty}\rho_{R}\left(t\right)=1-\phi_{t}^{\xi}$
almost surely (in both the graph and the initial state measures).
\end{prop}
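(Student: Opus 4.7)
The plan is to decompose the count of infected vertices by generation and exploit the branching structure of $G$. Denote by $Z_k$ the size of generation $k$ and by $Y_k$ the number of its vertices that are infected at time $t$, so that $\rho_R(t) = \sum_{k=0}^R Y_k / \sum_{k=0}^R Z_k$. The key observation is that the infection status of a vertex $v$ at time $t$ is determined by the depth-$t$ subtree rooted at $v$ together with the initial states on that subtree, since infection propagates only from children to parents. By the branching property of $G$, conditional on the first $k$ generations, the subtrees hanging from the $Z_k$ vertices of generation $k$ are i.i.d.\ GW trees, and the initial states on them are independent Bernoulli$(q)$. Consequently, conditional on $Z_k$, the indicators $\{\unity[v\text{ infected at time }t]:v\in\text{gen }k\}$ are i.i.d.\ Bernoulli random variables with mean $1-\phi_t^\xi$.

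Next, I would apply Hoeffding's inequality to obtain
\[
\mathbb{P}\bigl[\,|Y_k-Z_k(1-\phi_t^\xi)|\ge \epsilon Z_k \,\big|\, Z_k\,\bigr]\le 2\exp(-2\epsilon^2 Z_k).
\]
Since every vertex has at least $r\ge 2$ children, $Z_k\ge r^k$ deterministically, so the right-hand side decays faster than geometrically in $k$ and is summable. A Borel-Cantelli argument, applied along a countable sequence $\epsilon\downarrow 0$, then yields $Y_k/Z_k \to 1-\phi_t^\xi$ almost surely.

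Finally, I transfer this per-generation convergence to the ratio $\rho_R(t)$ using the fact that $N_R := \sum_{k=0}^R Z_k \ge r^R$ also grows exponentially. Given $\epsilon>0$, on an a.s.\ event one can pick $K$ (depending on the realization) so that $|Y_k-Z_k(1-\phi_t^\xi)|\le\epsilon Z_k$ for every $k\ge K$; splitting the sum at $K$ and bounding the first $K$ contributions by a finite constant $C_K$ yields
\[
|\rho_R(t) - (1-\phi_t^\xi)| \le \frac{C_K}{N_R} + \epsilon,
\]
and the first term vanishes as $R \to \infty$. Sending $\epsilon\to 0$ concludes the proof.

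The delicate step is the conditional i.i.d.\ structure of the indicators at generation $k$, which simultaneously invokes the Markov branching property of $G$ and the finite-range locality (propagation depth at most $t$) of the bootstrap dynamics. Once that is in place, the concentration inequality and the ratio argument are routine.
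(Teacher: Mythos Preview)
Your proof is correct and follows essentially the same strategy as the paper's: both exploit that, at each generation $k$, the infection indicators are conditionally i.i.d.\ Bernoulli$(1-\phi_t^\xi)$, use the deterministic bound $Z_k\ge r^k\ge 2^k$ together with a concentration inequality, and conclude via Borel--Cantelli. The only cosmetic difference is in the aggregation step---the paper restricts to a boundary annulus $B(R)\setminus B(R-w)$ of fixed width $w$ (which dominates the count since $|B(R-w)|\le 2^{-w}|B(R)|$), while you run a direct Ces\`aro-type argument over all generations; your organization is arguably a bit cleaner.
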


\subsection{\label{sec:Critical-Behavior}Critical Behavior}

Following \cite{balogh2006bootstrapregulartrees,bollobas2014bootstrapongwcriticality},
we define the critical probability
\begin{equation}
q_{c}=\sup_{\left[0,1\right]}\left\{ q:\,\phi_{\infty}^{\xi}=0\right\} .\label{eq:qcdef}
\end{equation}

In order to analyze this criticality, define
\begin{eqnarray}
g_{k}\left(x\right) & = & \frac{h_{k}\left(x\right)}{qx},\label{eq:defofgk}\\
g_{\xi}\left(x\right) & = & \frac{h_{\xi}\left(x\right)}{qx}.\label{eq:defofgxi}
\end{eqnarray}

In \cite{bollobas2014bootstrapongwcriticality}, the following fact is shown:
\begin{fact} \label{fact:gandcriticallity}
Fix $\xi$. Then:
\begin{enumerate}
\item For a given $q$, $\phi_{\infty}^{\xi}$ is the maximal solution in
$\left[0,1\right]$ of $g_{\xi}\left(x\right)=\frac{1}{q}$, and $0$
if no such solution exists.
\item $q_{c}=\frac{1}{\max_{\left[0,1\right]}g_{\xi}\left(x\right)}$.
\end{enumerate}
\end{fact}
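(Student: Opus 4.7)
My plan is to identify $\phi_\infty^\xi$ as the maximal fixed point of $h_\xi$ on $[0,1]$ via the recursion $\phi_{t+1}^\xi = h_\xi(\phi_t^\xi)$, then convert the fixed-point equation into the condition $g_\xi(x)=1/q$ by dividing by $qx$. The key preliminary facts about $h_\xi$ are: (i) it is non-decreasing on $[0,1]$ because $\mathrm{Bin}(k,1-x)$ is stochastically decreasing in $x$; (ii) $h_\xi(1)=q$; (iii) $h_\xi(0)=0$, since $\xi_k=0$ for $k<r$ forces $\mathbb{P}[\mathrm{Bin}(k,1)\le r-1]=0$ for every $k$ in the support of $\xi$. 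Continuity of $h_\xi$ on $[0,1]$ follows from uniform convergence of the series (\ref{eq:defh}), since the tail $\sum_{k\ge N}\xi_k h_k(x)$ is bounded by $q\sum_{k\ge N}\xi_k$ uniformly in $x$.

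These ingredients yield part 1 quickly. Starting from $\phi_0^\xi = q = h_\xi(1)$, monotonicity gives $\phi_1^\xi = h_\xi(q)\le h_\xi(1)=\phi_0^\xi$, and iterating shows that $(\phi_t^\xi)$ is monotonically decreasing; its limit $\phi_\infty^\xi\in[0,q]$ is then a fixed point of $h_\xi$ by continuity. For maximality, any fixed point $x^*\in[0,1]$ satisfies $x^*=h_\xi(x^*)\le q$, so the monotone comparison $\phi_t^\xi\ge x^*$ holds by induction in $t$, giving $\phi_\infty^\xi\ge x^*$. Since $x=0$ is already a fixed point by (iii), dividing $h_\xi(x)=x$ by $qx$ shows that the positive fixed points are precisely the positive solutions of $g_\xi(x)=1/q$. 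Thus $\phi_\infty^\xi$ equals the maximal solution of $g_\xi(x)=1/q$ in $[0,1]$ when one exists, and $0$ otherwise.

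For part 2 I would use continuity of $g_\xi$ on $[0,1]$: the only concern is the point $x=0$, where $g_\xi(x)=h_\xi(x)/(qx)$ has a removable singularity with value $r\xi_r$ (the lowest-order term in the expansion of $h_\xi/q$ comes from $k=r$, $j=r-1$). Therefore $M:=\max_{[0,1]}g_\xi$ is attained, and $g_\xi(1)=h_\xi(1)/q=1$. If $q<1/M$ then $1/q>M$, no solution of $g_\xi=1/q$ exists, and $\phi_\infty^\xi=0$ by part 1. If $q>1/M$, choose an argmax $x^\star$; since $g_\xi(x^\star)=M>1/q\ge 1=g_\xi(1)$, the intermediate value theorem produces a solution in the interval between $x^\star$ and $1$, which is necessarily positive (even in the degenerate case $x^\star=0$, since then $g_\xi(0)>1/q>g_\xi(1)$ forces a crossing strictly inside $(0,1)$). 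Hence $\phi_\infty^\xi>0$, and taking the supremum over $q$ with $\phi_\infty^\xi=0$ yields $q_c=1/M$.

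The step requiring the most care is the treatment of the endpoint $x=0$: verifying the removable-singularity calculation for $g_\xi(0)$ and checking the degenerate IVT scenario above, so that no pathological choice of $\xi$ concentrates the growth of $g_\xi$ at the boundary. Apart from this, the whole argument is an elementary monotone dynamical systems analysis on $[0,1]$.
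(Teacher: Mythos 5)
The paper does not prove this Fact at all --- it is quoted verbatim from \cite{bollobas2014bootstrapongwcriticality} --- so there is no in-paper argument to compare against. Your proof is correct and is the standard monotone-iteration argument: $h_\xi$ is non-decreasing and continuous with $h_\xi(0)=0$ and $h_\xi(1)=q=\phi_0^\xi$, so $\phi_t^\xi$ decreases to the maximal fixed point, and dividing by $qx$ converts positive fixed points into solutions of $g_\xi(x)=1/q$; part 2 then follows from the intermediate value theorem between an argmax of $g_\xi$ and $x=1$ where $g_\xi(1)=1$. The only point you correctly flag as delicate --- continuity of $g_\xi$ at $x=0$ with value $r\xi_r$ --- does go through, but deserves the explicit domination bound $g_k(x)\le r\binom{k}{r-1}x^{k-r}$ on, say, $[0,\tfrac12]$, which makes the series $\sum_k \xi_k g_k$ uniformly convergent near $0$ and justifies the term-by-term limit.
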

We will consider here the behavior near criticality, at $q$ slightly
smaller than $q_{c}$.
\begin{defn}
For $0<x<1$ and some positive $\delta$, the $\delta$-entrance time of $x$ is
\[
\tau^-_{x,\delta}(q) = \min\{t:\,\phi^\xi_t<x+\delta\},
\]
and the $\delta$-exit time is defined as
\[
\tau^+_{x,\delta}(q) = \min\{t:\,\phi^\xi_t<x-\delta\}.
\]

\end{defn}
\begin{defn}
Fix $\delta>0$. We say that the critical point
is $\delta$-$\left(\nu_{1},\dots,\nu_{n}\right)$-metastable at $x_{1}>\dots>x_{n}>0$
if, for $q\nearrow q_{c}$, the following hold:
\begin{enumerate}
\item $\tau_{x_1,\delta}^{-}=O\left(1\right)$.
\item $\frac{\log\left(\tau_{x_i,\delta}^{+}-\tau_{x_i,\delta}^{-}\right)}{\log\left(q_{c}-q\right)}\xrightarrow{q\nearrow q_{c}}-1+\frac{1}{2\nu_{i}}$
for $i=1,\dots,n$.
\item $\tau_{x_{i+1},\delta}^{-}-\tau_{x_i,\delta}^{+}=O\left(1\right)$ for $i=1,\dots,n$ and $x_{n+1}=0$.
\end{enumerate}
We say that the critical point is $\left(\nu_{1},\dots,\nu_{n}\right)$-metastable at $x_{1}>\dots>x_{n}$ if it is $\delta$-$\left(\nu_{1},\dots,\nu_{n}\right)$-metastable at $x_{1}>\dots>x_{n}$ for small enough $\delta$. See \figref{multiplemetastabilities}.
\begin{figure}
\includegraphics[width=1\linewidth]{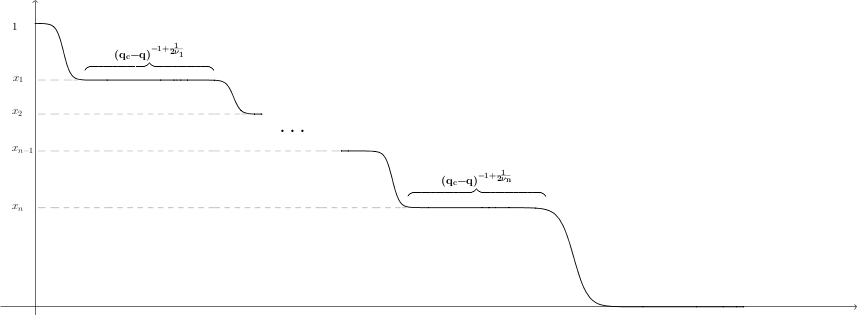}

\caption{\label{fig:multiplemetastabilities}A schematic picture of $\phi_{t}^{\xi}$
as a function of $t$ for a $\left(\nu_{1},\dots,\nu_{n}\right)$-metastable
criticality at $x_{1}>\dots>x_{n}$. }
\end{figure}

\end{defn}
The following theorem gives a full classification of the metastability
properties:
\begin{thm}
\label{thm:zeologyofmetastabilities}Fix $\xi$. Then the metastable
behavior is determined by one of the following cases:
\begin{casenv}
\item $g_{\xi}$ attains its maximum at $1$. In this case the critical probability
is $1$.
\item $g_{\xi}$ has a unique maximum at $0$. In this case the phase transition
is continuous. At the critical point
\begin{equation}
\frac{\log(\phi_{t}^{\xi})}{\log t} \xrightarrow{t \rightarrow \infty} -\frac{1}{\nu},\label{eq:criticalbehaviorcontinuous}
\end{equation}
where $\nu$ is determined by the asymptotic expansion $g_{\xi}\left(x\right)=\frac{1}{q_{c}}-Cx^{\nu}+o\left(x^{\nu}\right)$.

\item The maximum of $g_{\xi}$ is attained at the points $x_{1},\dots,x_{n}$ for $1>x_{1}>\dots>x_{n}>0$, and possibly also at $0$.
In this case the phase transition is discontinuous. For $i=1,\dots,n$ we
may write around $x_{i}$
\begin{equation}
g_{\xi}\left(x\right)=\frac{1}{q_{c}}-C_{i}\left(x-x_{i}\right)^{2\nu_{i}}+o\left(\left(x-x_{i}\right)^{2\nu_{i}}\right),\label{eq:asymptoticsaroundmax}
\end{equation}
with some $C_i>0$.

Then the critical point is $\left(\nu_{1},\dots,\nu_{n}\right)$-metastable
at $x_{1}>\dots>x_{n}$.

\end{casenv}
\end{thm}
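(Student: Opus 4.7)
The whole analysis hinges on the recursion $\phi_{t+1}^\xi = q \phi_t^\xi g_\xi(\phi_t^\xi)$: the iterate decreases quickly wherever $q g_\xi$ is bounded away from $1$, and lingers near points where $q g_\xi \approx 1$. Case 1 is immediate: a direct computation from the definition of $h_k$ gives $g_\xi(1)=1$, so $\max g_\xi \ge 1$, and if the maximum is attained at $1$ then Fact \ref{fact:gandcriticallity} yields $q_c = 1/\max g_\xi = 1$. For Case 2 I would work at $q = q_c$ and insert $g_\xi(x) = 1/q_c - C x^\nu + o(x^\nu)$ into the recursion to obtain
\begin{equation*}
\phi_{t+1}^\xi = \phi_t^\xi - C q_c (\phi_t^\xi)^{\nu+1} + o\bigl((\phi_t^\xi)^{\nu+1}\bigr).
\end{equation*}
The substitution $\psi_t = (\phi_t^\xi)^{-\nu}$ together with a binomial expansion gives $\psi_{t+1} - \psi_t = \nu C q_c + o(1)$, hence $\psi_t \sim \nu C q_c\, t$ and $\phi_t^\xi \sim (\nu C q_c\, t)^{-1/\nu}$, which is precisely \eqref{eq:criticalbehaviorcontinuous}; continuity of the transition follows since $\phi_\infty^\xi=0$ here.

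For Case 3, fix a small $\delta > 0$ and write $\epsilon = q_c - q > 0$. The plan is to split each orbit into transit pieces (where $\phi_t^\xi$ lies outside every $(x_i - \delta, x_i + \delta)$) and metastable pieces (where it lies inside one of them), and estimate each. On the transit region $g_\xi < 1/q_c$ strictly, so by compactness there is $\eta = \eta(\delta) > 0$ with $q g_\xi(x) \le 1 - \eta$ uniformly in $q$ close enough to $q_c$ and in $x$ in that region (including below $x_n$ once we condition on $\phi_t^\xi \ge \delta$, even if $0$ is also a maximum). A one-line comparison then shows that $\phi_t^\xi$ contracts geometrically toward the next metastable level, or toward $0$ after $x_n$, yielding $\tau^-_{x_1, \delta} = O(1)$ and $\tau^-_{x_{i+1}, \delta} - \tau^+_{x_i, \delta} = O(1)$ for every $i$.

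The heart of the proof is the residence time inside each metastable region. Writing $y_t = \phi_t^\xi - x_i$ and inserting \eqref{eq:asymptoticsaroundmax} into the recursion yields, to leading order,
\begin{equation*}
y_{t+1} - y_t = -\alpha_i \epsilon - \beta_i\, y_t^{2\nu_i} + o(\epsilon) + o\bigl(y_t^{2\nu_i}\bigr),
\end{equation*}
with $\alpha_i = x_i/q_c$ and $\beta_i = q_c C_i x_i$. Treating this as an Euler discretization of $\dot y = -\alpha_i \epsilon - \beta_i y^{2\nu_i}$, the time to traverse $(-\delta, \delta)$ is asymptotically
\begin{equation*}
T_i \sim \int_{-\delta}^{\delta} \frac{dy}{\alpha_i \epsilon + \beta_i y^{2\nu_i}},
\end{equation*}
and the rescaling $y = (\alpha_i \epsilon / \beta_i)^{1/(2\nu_i)} u$ turns this into $\epsilon^{-1 + 1/(2\nu_i)}\int_{-\delta'}^{\delta'} du/(1 + u^{2\nu_i})$ with $\delta' \to \infty$. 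Since $2 \nu_i \ge 2$ the integral converges, so $T_i = \Theta\bigl(\epsilon^{-1 + 1/(2\nu_i)}\bigr)$, and taking logarithms yields condition (2) of the metastability definition.

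The main obstacle is making this discrete-to-continuous comparison rigorous uniformly in $\epsilon$: both the effective step size (relative to the window $\delta$ around $x_i$) and the total number of steps diverge as $\epsilon \to 0$, so the $o$-corrections in the local expansion must be shown to contribute only subleading terms to $T_i$. I would handle this by sandwiching the recursion between two continuous flows with slightly perturbed coefficients $\alpha_i \pm o(1)$, $\beta_i \pm o(1)$, exploiting monotonicity of $y \mapsto y - \alpha_i \epsilon - \beta_i y^{2\nu_i}$ on the relevant range to propagate the bounds step by step, and checking the boundary layers at $y = \pm \delta$ separately so that each metastable block glues smoothly onto the surrounding $O(1)$ transit estimates.
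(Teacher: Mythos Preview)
Your approach is essentially the paper's: the same substitution $\psi_t=(\phi_t^\xi)^{-\nu}$ for Case~2 (the appendix writes this as $t_n=(x_n-y_0)^{1-\alpha}$ with $\alpha=\nu+1$), the same transit/metastable decomposition for Case~3, and the same ODE comparison and rescaling of the integral $\int dy/(\alpha_i\epsilon+\beta_i y^{2\nu_i})$. The paper's rigorous version of your discrete-to-continuous step (Lemma~\ref{lem:comparetodiffeq}) tracks the continuous \emph{crossing times} $\tau_n$ at which the ODE solution reaches $x_n^\varepsilon-y_0$ and shows $\tau_n=(1\pm\kappa)n$; this is a slightly different bookkeeping from your sandwich-between-two-flows idea, but the content is the same.

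One genuine omission: you never establish that $g_\xi$ is analytic on $(-1,1)$ and continuous at $1$, and the theorem's trichotomy is not well-posed without it. You need analyticity to conclude that the set of maximizers of $g_\xi$ in $(0,1)$ is finite, and that at each interior maximizer $g_\xi$ admits an expansion with leading term $-C_i(x-x_i)^{2\nu_i}$ for some \emph{integer} $\nu_i$ (the theorem's ``we may write'' in Case~3 is a claim to be proved, not a hypothesis). The paper handles this first: by Claim~\ref{claim:gkispolynomial} each $g_k$ is a polynomial whose lowest-degree monomial has degree $k-r$, so the coefficient of $x^n$ in the partial sums $\sum_{k=r}^N\xi_k g_k$ stabilizes once $N>n+r$; combined with convergence at $x=1$ this gives a power series with radius of convergence at least $1$. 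Add this step before launching into the dynamics.
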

\begin{rem}
In the first case, where the critical probability is $1$, it is not
clear whether or not an asymptotic expansion exists, since $g_{\xi}$
is not guaranteed to be analytic. When it does exist, one can recover
a result similar to Case 3.
\end{rem}
Finally, we show the main result -- that the different metastability
behaviors described above are possible:
\begin{thm}
\label{thm:everythingispossible}~\end{thm}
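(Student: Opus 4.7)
The plan is to exhibit, for each metastability profile allowed by \thmref{zeologyofmetastabilities}, an explicit offspring distribution realizing it. Since the classification depends only on the shape of $g_{\xi}=\sum_{k\ge r}\xi_{k}g_{k}$, the problem reduces to choosing nonnegative weights summing to $1$ so that $g_{\xi}$ has prescribed maxima and prescribed tangency orders at them. The elementary building blocks are explicit polynomials: $g_{r}(x)=r-\binom{r}{2}x+\cdots$ and, for $k>r$, $g_{k}(x)=\binom{k}{r-1}x^{k-r}+O(x^{k-r+1})$ near $0$, whereas $g_{k}(1)=1$, $g_{k}'(1)=-1$, and $g_{k}$ peaks near $x\approx 1-r/k$ with height $1+O(1/k)$.

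For Case 1 I would use a heavy-tailed $\xi$. Although $g_{k}'(1)=-1$ for every $k$, termwise differentiation fails in the sum when $\xi$ has heavy enough tails. The asymptotics $g_{k}(1-\epsilon)\approx 1+\epsilon$ for $k\ll r/\epsilon$ and $g_{k}(1-\epsilon)\approx 0$ for $k\gg r/\epsilon$ yield
\[
g_{\xi}(1-\epsilon)-1 \;\approx\; \epsilon\sum_{k<r/\epsilon}\xi_{k}-\sum_{k\ge r/\epsilon}\xi_{k},
\]
so a tail $\xi_{\ge K}\gtrsim K^{-\beta}$ with $\beta<1$ forces this to be negative near $1$; a mild estimate on $[0,1-\eta]$ then completes $\max g_{\xi}=1$, i.e.\ $q_{c}=1$. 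For Case 2 with prescribed $\nu$, I take $\xi$ supported on $\{r,r+1,\ldots,r+\nu\}$: the Taylor coefficients of $g_{\xi}$ at $0$ form an upper-triangular linear system in $(\xi_{r},\ldots,\xi_{r+\nu})$, which I solve to annihilate the coefficients of $x,\ldots,x^{\nu-1}$ while keeping that of $x^{\nu}$ strictly negative. Positivity of the resulting weights can be verified directly from the signs in the triangular inversion (as the explicit computations for $r=2$, $\nu=2,3$ illustrate), and strict global maximality at $0$ is arranged by taking $\xi_{r}$ slightly larger.

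For Case 3 the target is a $g_{\xi}$ with equal-height maxima at prescribed $x_{1}>\cdots>x_{n}>0$ of orders $2\nu_{1},\ldots,2\nu_{n}$, which amounts to approximately $2\sum\nu_{i}-1$ linear constraints on the $\xi_{k}$'s (including the equal-height condition but excluding normalization). Taking $\xi$ supported on a sufficiently long initial segment of $\{r,r+1,\ldots\}$ provides more than enough free parameters. I would proceed inductively on $n$: starting from a single-max distribution at $x_{1}$ of order $2\nu_{1}$ (obtained by an affine shift and rescaling of the Case 2 analysis), I would add one bump at a time via a perturbative implicit-function argument that preserves the already-fixed tangencies together with positivity of the weights. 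Once $g_{\xi}$ has the right shape, the metastable scaling $\tau_{x_{i},\delta}^{+}-\tau_{x_{i},\delta}^{-}\asymp(q_{c}-q)^{-1+1/(2\nu_{i})}$ is read off the recursion $\phi_{t+1}^{\xi}=h_{\xi}(\phi_{t}^{\xi})$ via the local form $g_{\xi}(x)=\tfrac{1}{q_{c}}-C_{i}(x-x_{i})^{2\nu_{i}}+\cdots$, exactly as in \thmref{zeologyofmetastabilities}.

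The main obstacle is the positivity of the weights in Case 3, which a naive linear-algebra existence argument does not deliver. The inductive perturbative scheme side-steps this by never leaving a small neighbourhood of a previously built positive distribution. A secondary check is that $g_{\xi}$ does not develop an unwanted global maximum in the intervals between the prescribed $x_{i}$'s, which is handled by direct monotonicity estimates on the finite-support polynomial $g_{\xi}$ in each sub-interval.
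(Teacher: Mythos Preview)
Your proposal has genuine gaps in both parts, and it misses the two ideas that make the paper's proof work cleanly.

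For Part 1 you solve a triangular linear system on the Taylor coefficients of $g_\xi$ at $0$ and then assert that (i) the resulting weights are nonnegative and (ii) $0$ is the \emph{global} maximum of $g_\xi$ on $[0,1]$. Neither follows from the triangular structure: the sign pattern in the inverse is not obviously positive for general $r,\nu$, and annihilating the first $\nu-1$ coefficients only forces a \emph{local} maximum at $0$. Your fix ``take $\xi_r$ slightly larger'' destroys the very equations you just solved, since $g_r$ contributes to every coefficient $x^0,\ldots,x^{r-1}$. The paper avoids all of this with one identity you do not use: for $\xi_k=\frac{r-1}{k(k-1)}$ one has $g_\xi\equiv 1$ (\claimref{constantg}). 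Truncating this sequence at $k=\nu+r-1$ gives, automatically, positive weights and the exact global form $g_\chi(x)=1-x^\nu P(x)$ with $P(x)=\sum_{k\ge \nu+r}\frac{r-1}{k(k-1)}g_k(x)/x^\nu>0$ on $[0,1]$.

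For Part 2 your base case is not there: an ``affine shift and rescaling of the Case~2 analysis'' does not produce a legitimate $g_\xi$ with a maximum at $x_1>0$, because translating $x$ is not an operation on the coefficients $\xi_k$. Your inductive scheme also never explains how the new bump is placed at \emph{exactly} the same height as the previous ones, which is the whole content of having several global maxima. The paper's device is to impose the structural ansatz
\[
g_{\overline\chi}(x)=1-(x-x_1)^{2\nu_1}\cdots(x-x_n)^{2\nu_n}\,\overline P(x),\qquad \deg\overline P\le r-2,
\]
which forces equal heights and the prescribed tangency orders simultaneously and reduces the ``no extra maxima'' issue to $\overline P>0$. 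The existence of such $\overline\chi,\overline P$ is then a single continuity argument: at $(x_1,\ldots,x_n)=0$ the problem is exactly Part~1 with $\nu=2\sum\nu_i$, where $\chi_k>0$ strictly and $P>0$ strictly; the linear map ``multiply by $\prod(x-x_i)^{2\nu_i}$ and project off $\mathrm{span}(g_r,\ldots,g_{\nu+r-1})$'' is invertible at $0$ and hence nearby, so for $(x_1,\ldots,x_n)$ in a small neighbourhood of $0$ all positivity constraints persist. This replaces your induction-on-bumps and your separate monotonicity checks by one perturbation from an explicit positive solution.
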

\begin{enumerate}
\item Let $\nu\in\mathbb{N}$. Then there exists $\xi$ such that the phase
transition is continuous, and satisfies \eqref{criticalbehaviorcontinuous}
at criticality.
\item Let $\left(\nu_{1},\dots,\nu_{n}\right)\in\mathbb{N}^{n}$. Then there
exist $\xi$ and $x_{1}>\dots>x_{n}$ such that the critical point is $\left(\nu_{1},\dots,\nu_{n}\right)$-metastable
at $x_{1}>\dots>x_{n}$.
\end{enumerate}

\section{Proofs}
\begin{proof}
[Proof of \propref{prevalence}]The idea of the proof is to notice
that the main contribution to the prevalence comes from the
sites close to the boundary, and then use their independence. Thus,
we fix a width $w$, and consider
\[
\rho_{R,w}\left(t\right)=\frac{\left|\left\{ \mbox{infected vertices in }B\left(R\right)\setminus B\left(R-w\right)\mbox{ at time }t\right\} \right|}{\left|B\left(R\right)\setminus B\left(R-w\right)\right|}.
\]

First, we claim that $\rho_{R}\left(t\right)$ is approximated by
$\rho_{R,w}\left(t\right)$ for large $w$. More accurately, we have
$\left|B\left(R-w\right)\right|\le2^{-w}\left|B(R)\right|$, which also implies that
the number of infected vertices in $B\left(R\right)\setminus B\left(R-w\right)$
is the same as the number of infected vertices in $B\left(R\right)$,
up to a correction of order $2^{-w}|B(R)|$. Then
\begin{equation}
\rho_{R}\left(t\right)=\rho_{R,w}\left(t\right)+O\left(2^{-w}\right).\label{eq:prevalenceonanulusclosetototal}
\end{equation}

We would now like to bound the distance between $\rho_{R,w}\left(t\right)$
and $1-\phi_{t}^{\xi}$. Let $\varepsilon>0$, and, by \eqref{prevalenceonanulusclosetototal},
take $w$ big enough such that $\left|\rho_{R}\left(t\right)-\rho_{R,w}\left(t\right)\right|<\frac{\varepsilon}{2}$ uniformly in $R$.
Note that $\rho_{R,w}\left(t\right)$ is a weighted average of the
$w$ random variables $\rho_{R,1}\left(t\right),\rho_{R-1,1}\left(t\right),\dots,\rho_{R-w+1,1}\left(t\right)$,
and consider one of these variables, $\rho_{r,1}\left(t\right)$.
This variable is the average of the random variables $\unity_{v\,\mbox{\tiny{is infected}}}$
for all vertices $v$ of distance $r$ from the root, and since these
are independent Bernoulli random variables with mean $1-\phi_{t}^{\xi}$,
and since there are at least $2^{R-w+1}$ such variables, we can use
a large deviation estimate, yielding

\[
\mathbb{P}\left[\left|\rho_{r,1}(t)-\left(1-\phi_{t}^{\xi}\right)\right|>\frac{\varepsilon}{2}\right]\le e^{-c\,2^{R-w+1}}
\]
for a positive $c$ that only depends on $\varepsilon$ and on $\phi_{t}^{\xi}$.
Since for $1-\phi_{t}^{\xi}$ to be far from $\rho_{R,w}\left(t\right)$
it must be far from at least one of the variables $\rho_{R,1}\left(t\right),\rho_{R-1,1}\left(t\right),\dots,\rho_{R-w+1,1}\left(t\right)$,
we have

\begin{equation}
\mathbb{P}\left[\left|\rho_{R,w}\left(t\right)-\left(1-\phi_{t}^{\xi}\right)\right|>\frac{\varepsilon}{2}\right]\le we^{-c\,2^{R-w+1}}.\label{eq:prevalenceonanulusclosetoaverage}
\end{equation}

Hence, $\rho_{R}(t)$
is $\varepsilon$-close to $1-\phi_{t}^{\xi}$ with probability larger
than $1-we^{-c\,2^{R-w+1}}$, which concludes the proof by the Borel-Cantelli
lemma.
\end{proof}
Before proving Theorems \ref{thm:zeologyofmetastabilities} and \ref{thm:everythingispossible},
we will need a couple of small results.

\begin{claim}
\label{claim:gkispolynomial}$g_{k}$ is a polynomial of degree $k-1$,
whose lowest degree monomial is of degree $k-r$.\end{claim}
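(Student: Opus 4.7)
My plan is to start from the explicit formula for $h_k$ in \eqref{defhregular}, namely
\[
h_k(x) = q\sum_{j=0}^{r-1}\binom{k}{j}(1-x)^j x^{k-j},
\]
and divide by $qx$ to obtain
\[
g_k(x) = \sum_{j=0}^{r-1}\binom{k}{j}(1-x)^j x^{k-j-1}.
\]
This is manifestly a polynomial (since $k\ge r$ implies $k-j-1\ge 0$ for every $j$ in the sum), so the task reduces to identifying the top and bottom degrees.

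For the upper bound, I note that each summand $(1-x)^j x^{k-j-1}$ has total degree $j+(k-j-1)=k-1$, so $\deg g_k\le k-1$. To see that the degree is exactly $k-1$, I extract the coefficient of $x^{k-1}$: from $(1-x)^j x^{k-j-1}$ it equals the coefficient of $x^j$ in $(1-x)^j$, i.e., $(-1)^j$. Summing,
\[
[x^{k-1}]g_k(x)=\sum_{j=0}^{r-1}(-1)^j\binom{k}{j}=(-1)^{r-1}\binom{k-1}{r-1},
\]
by the standard partial-sum identity for alternating binomials. Since $k\ge r$, the right-hand side is nonzero.

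For the lower bound, observe that the smallest power of $x$ appearing in the summand indexed by $j$ is $x^{k-j-1}$. The minimum of $k-j-1$ over $0\le j\le r-1$ is attained uniquely at $j=r-1$, giving $x^{k-r}$. Its coefficient is therefore $\binom{k}{r-1}>0$, so $x^{k-r}$ does appear and is the lowest-degree monomial of $g_k$. No step is genuinely hard; the only point that requires a brief lemma is the alternating-sum identity used to verify that the leading coefficient does not vanish, but since $r-1\le k-1$ the identity applies directly.
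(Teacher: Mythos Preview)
Your proof is correct and follows essentially the same approach as the paper: both write $g_k(x)=\sum_{j=0}^{r-1}\binom{k}{j}(1-x)^j x^{k-j-1}$, identify the coefficient of $x^{k-r}$ as $\binom{k}{r-1}$, and argue that the top coefficient $\sum_{j=0}^{r-1}(-1)^j\binom{k}{j}$ is nonzero. The only difference is that you explicitly evaluate this alternating sum as $(-1)^{r-1}\binom{k-1}{r-1}$, whereas the paper merely asserts nonvanishing from $0<r-1<k$.
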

\begin{proof}
By equations \ref{eq:defofgk} and \ref{eq:defhregular}
\begin{eqnarray*}
g_{k}\left(x\right) & = & \frac{\mathbb{P}\left[\mbox{Bin}\left(k,1-x\right)\le r-1\right]}{x}\\
 & = & \sum_{i=0}^{r-1}{k \choose i}\left(1-x\right)^{i}x^{k-i-1};
\end{eqnarray*}
therefore all monomials are of degree between $k-r$
and $k-1$. The coefficient of $x^{k-r}$ is ${k \choose r-1}\neq0$,
and the coefficient of $x^{k-1}$ is $\sum_{i=0}^{r-1}{k \choose i}\left(-1\right)^{i}$,
which is also nonzero since $0<r-1<k$. This concludes the proof.\end{proof}
\begin{claim}
\label{claim:basisofpolynomials}$g_{r}\left(x\right),\dots,g_{m}\left(x\right),x^{m-r+1},\dots,x^{m-1}$
is a basis of the linear space of polynomials of degree smaller or
equal to $m-1$.\end{claim}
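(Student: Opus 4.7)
The plan is to show that the $m$ listed polynomials are linearly independent in the $m$-dimensional vector space of polynomials of degree at most $m-1$, which is enough to conclude that they form a basis. First I would verify the count: the $g_k$'s contribute $m-r+1$ polynomials (for $k=r,\dots,m$), and the monomials contribute $r-1$ polynomials (for the exponents $m-r+1,\dots,m-1$), giving a total of $m$, matching the dimension.

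For linear independence, the key is to read off \claimref{gkispolynomial}: the lowest-degree monomial of $g_k$ has degree $k-r$ with nonzero coefficient. Hence the minimum degrees of $g_r,\dots,g_m$ form the sequence $0,1,\dots,m-r$, while the monomials $x^{m-r+1},\dots,x^{m-1}$ contribute the missing degrees $m-r+1,\dots,m-1$. Taken together, the trailing degrees of the listed polynomials exhaust $\{0,1,\dots,m-1\}$ without repetition, which is the setup for a triangular argument.

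Concretely, I would suppose a linear combination
\[
\sum_{k=r}^{m} a_k g_k(x) + \sum_{j=m-r+1}^{m-1} b_j x^j
\]
vanishes identically and extract coefficients in order of increasing degree. The coefficient of $x^0$ involves only $g_r$ (the other $g_k$'s have trailing degree $k-r>0$ and the monomials have degree at least $m-r+1$), forcing $a_r=0$. Next, the coefficient of $x^1$ involves only $g_{r+1}$, forcing $a_{r+1}=0$, and inductively reading off $x^0,x^1,\dots,x^{m-r}$ yields $a_r=\dots=a_m=0$. What remains is $\sum_j b_j x^j=0$, which forces all $b_j=0$ by the independence of distinct monomials.

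I do not expect a real obstacle: once \claimref{gkispolynomial} has pinned down the trailing degree of each $g_k$, the proof is bookkeeping. The only care needed is to note that the degree ranges of $g_k$ and of the additional monomials are disjoint at the low end and complementary overall, so that the triangular elimination goes through cleanly.
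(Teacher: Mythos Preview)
Your proof is correct and is essentially the same as the paper's: both rely on \claimref{gkispolynomial} to note that the trailing (lowest-degree) monomials of $g_r,\dots,g_m,x^{m-r+1},\dots,x^{m-1}$ have the distinct degrees $0,1,\dots,m-1$, yielding a triangular structure. The paper phrases this as the coefficient matrix being upper triangular with nonzero diagonal, while you spell out the corresponding elimination step by step, but the argument is identical.
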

\begin{proof}
Denote $v_{1}\left(x\right)=g_{r}\left(x\right),\dots,v_{m-r+1}\left(x\right)=g_{m}\left(x\right),v_{m-r+2}\left(x\right)=x^{m-r+1},v_{m}\left(x\right)=x^{m-1}$.
By \claimref{gkispolynomial}, all $v$'s are of degree smaller or
equal to $m-1$. Moreover, the matrix whose $\left(i,j\right)$ entry
is the coefficient of $x^{j}$ in the polynomial $v_{i}$ is upper
triangular, with nonzero diagonal. This shows that $\left\{ v_{i}\right\} _{i=1}^{m}$
is indeed a basis. 
\end{proof}

We will also use the following result from \cite{bollobas2014bootstrapongwcriticality}:
\begin{claim}
[Claim 3.9 of \cite{bollobas2014bootstrapongwcriticality}]\label{claim:constantg}For
$\xi_{k}=\frac{r-1}{k\left(k-1\right)}$, $g_{\xi}\left(x\right)=1$.
\end{claim}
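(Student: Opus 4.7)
My plan is to reduce the claim to a classical binomial identity via summation by parts. Writing $p = 1-x$ and $B_k = \mathbb{P}[\mathrm{Bin}(k, p) \le r-1]$, and unpacking \eqref{defofgk} and \eqref{defofgxi}, the claim $g_\xi(x) = 1$ is equivalent to $(r-1)\sum_{k \ge r} B_k/(k(k-1)) = 1-p$. Along the way one also verifies that $\xi$ is a probability distribution, since $\sum_{k \ge r} \xi_k = (r-1)\sum_{k \ge r}\left(\frac{1}{k-1}-\frac{1}{k}\right) = 1$, and the same partial-fraction telescoping will drive the main computation.

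The key observation is that $\xi_k = S_k - S_{k+1}$ with $S_k = (r-1)/(k-1)$, and $S_r = 1$, $S_k \to 0$, $B_k \to 1$. Abel summation then yields
\[
(r-1)\sum_{k \ge r} \frac{B_k}{k(k-1)} = B_r + \sum_{k > r}(B_k - B_{k-1})\,\frac{r-1}{k-1},
\]
with $B_r = 1-p^r$. Coupling $\mathrm{Bin}(k, p)$ to $\mathrm{Bin}(k-1, p)$ via one extra Bernoulli gives the familiar identity $B_k - B_{k-1} = -\binom{k-1}{r-1}p^r(1-p)^{k-r}$.

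After simplifying $\binom{k-1}{r-1}/(k-1) = \binom{k-2}{r-2}/(r-1)$ and reindexing $j = k-r$, the tail reads $-p^r \sum_{j \ge 1}\binom{j+r-2}{r-2}(1-p)^j$, which by the negative-binomial identity $\sum_{j \ge 0}\binom{j+r-2}{r-2}y^j = (1-y)^{-(r-1)}$ with $y = 1-p$ collapses to $-p + p^r$. Adding the boundary contribution gives $(1 - p^r) + (-p + p^r) = 1 - p = x$, as required. No analytical subtleties arise since every series converges absolutely on $[0,1]$; the only challenge is the algebraic bookkeeping of binomial coefficients through the summation by parts and the reindexing, which is where I would be most careful.
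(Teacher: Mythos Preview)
The paper does not prove this claim; it is quoted as Claim~3.9 of \cite{bollobas2014bootstrapongwcriticality}, so there is no in-paper argument to compare against. Your proof is correct and self-contained: the Abel summation with $S_k=(r-1)/(k-1)$, the difference identity $B_k-B_{k-1}=-\binom{k-1}{r-1}p^{r}(1-p)^{k-r}$, the simplification $\tfrac{r-1}{k-1}\binom{k-1}{r-1}=\binom{k-2}{r-2}$, and the negative-binomial series all check out, and the final bookkeeping $(1-p^{r})+(-p+p^{r})=1-p$ is right.

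Two minor points to clean up. First, you write ``$B_k\to 1$'', but for $p\in(0,1]$ one has $B_k=\mathbb{P}[\mathrm{Bin}(k,p)\le r-1]\to 0$. This is harmless: the boundary term $B_N S_{N+1}$ in the Abel summation vanishes simply because $S_{N+1}=(r-1)/N\to 0$ and $|B_N|\le 1$, so the step goes through regardless of the limit of $B_k$. Second, your blanket claim of absolute convergence on all of $[0,1]$ is not quite right at the endpoint $p=0$, where the negative-binomial series $\sum_{j\ge 0}\binom{j+r-2}{r-2}(1-p)^j$ diverges; but there every $B_k=1$ and the original sum telescopes to $1$ directly, so the identity is immediate (and the value at $x=0$ then follows by continuity, since what you actually prove is $x\,g_\xi(x)=x$). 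These are cosmetic fixes; the argument itself is sound.
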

We are now ready to prove Theorems \ref{thm:zeologyofmetastabilities}
and \ref{thm:everythingispossible}.
\begin{proof}
[Proof of \thmref{zeologyofmetastabilities}]First, we note that
$g_{k}\left(1\right)=1$ for all $k$, so in particular the series
$\sum_{k=r}^{\infty}\xi_{k}g_{k}\left(x\right)$ converges at $x=1$.
By \claimref{gkispolynomial}, the monomials of degree up to $n$ of the partial sum $\sum_{k=r}^{N}\xi_{k}g_{k}\left(x\right)$ are fixed once $N>n+r$.
From these two facts we
conclude that $g_\xi(x)$ is analytic in $\left(-1,1\right)$
and continuous at $1$. Thus, cases 1, 2 and 3 exhaust all possibilities.

The result will then follow from general arguments of dynamical systems
near a bifurcation point. Since the exact calculations are a bit tedious, we only give here a short sketch of the argument, referring to the appendix for the complete proof.

For case 2, the expression
\[\phi_{t+1} = \phi_{t} - Cq_c \phi_t^{\nu+1} + o(\phi_t^{\nu+1})
\]
could be estimated by comparing to the differential equation 
\[
\frac{\text{d}\phi}{\text{d}t} = - Cq_c \phi_t^{\nu+1}.
\]
This equation could be solved explicitly, yielding the asymptotics of \eqref{criticalbehaviorcontinuous}.

For case 3, the approximate differential equation will be
\[
\frac{\text{d}\phi}{\text{d}t} = - \frac{x_i}{q_c}(q_c-q) -C_i q_c x_i (\phi - x_i)^{2\nu_i}.
\]
The solution of this equation has a plateau around $x_i$, whose length diverges as $(q_c-q)^{-1+\frac{1}{2\nu_i}}$.
\end{proof}
~
\begin{proof}
[Proof of \thmref{everythingispossible}] For the first part, it
will be enough to show that there exist an offspring distribution
$\xi$ and a polynomial $Q(x)=b_{0}+\dots+b_{r-2}x^{r-2}$ such that
\begin{enumerate}
\item $g_{\xi}\left(x\right)=\text{Const}-x^{\nu}Q(x)$.
\item $Q(x)>0$ for all $x\in\left[0,1\right]$.
\end{enumerate}

This $\xi$, according to \thmref{zeologyofmetastabilities} and the
fact that $b_{0}>0$, will indeed satisfy \eqref{criticalbehaviorcontinuous}.
Rather than $\xi$, it will be easier to find a sequence $\left\{ \chi_{k}\right\} _{k=r}^{\infty}$
with a finite sum together with a polynomial $P(x)=a_{0}+\dots+a_{r-2}x^{r-2}$,
such that
\begin{enumerate}
\item $g_{\chi}\left(x\right)=\sum_{k}\chi_{k}g_{k}\left(x\right)=1-x^{\nu}P\left(x\right)$.
\item $\chi_{k}\ge0$.
\item $P\left(x\right)>0$ for all $x\in\left[0,1\right]$.
\end{enumerate}

Taking $\xi=\frac{1}{\sum\chi_{k}}\chi_{k}$ will then conclude the
proof. Let

\begin{equation}
\chi_{k}=\begin{cases}
\frac{r-1}{k\left(k-1\right)} & r\le k\le\nu+r-1\\
0 & k\ge\nu+r
\end{cases}.\label{eq:chidef}
\end{equation}

Using \claimref{constantg}, we may write

\[
g_{\chi}\left(x\right)=1-\sum_{k=\nu+r}^{\infty}\frac{r-1}{k\left(k-1\right)}g_{k}\left(x\right).
\]

By \claimref{gkispolynomial} $g_{\chi}$ is a polynomial of degree
$\nu+r-2$, therefore $\sum_{k=\nu+r}^{\infty}\frac{r-1}{k\left(k-1\right)}g_{k}\left(x\right)$
equals a polynomial of degree $\nu+r-2$. Using again \claimref{gkispolynomial},
we can define the polynomial

\[
P\left(x\right)=\sum_{k=\nu+r}^{\infty}\frac{r-1}{k\left(k-1\right)}\frac{g_{k}\left(x\right)}{x^{\nu}}.
\]

It is left to show that $P\left(x\right)>0$ for all $x\in\left[0,1\right]$.
By equations \ref{eq:defofgk} and \ref{eq:defhregular}, $P(x)$
is non-negative and could only vanish at $x=0$. But by \claimref{gkispolynomial},
$P(0)=\frac{r-1}{(\nu+r)(\nu+r-1)}\left(\frac{g_{\nu+r}(x)}{x^{\nu}}\right)_{x=0}\neq0$.
This concludes the first part.
\begin{rem}
\label{rem:uniquesolforP}Note that, by \claimref{basisofpolynomials},
we can define the projection Pr from the space of polynomials of degree
at most $r+\nu-2$ to its subspace spanned by $x^{\nu},\dots,x^{\nu+r-2}$
with kernel spanned by $g_{r}(x),\dots,g_{\nu+r-1}(x)$. Define also
$M_{0}$ to be the map from the space of polynomials of degree at
most $r-2$ to the space of polynomials of degree at most $r+\nu-2$
given by the multiplication by \textbf{$x^{\nu}$}. Then the first
of the conditions above can be written as
\[
\mbox{Pr}M_{0}\,P=\mbox{Pr }1.
\]

Since $\mbox{Pr}\circ M_{0}$ is bijective, this equation has a unique
solution; and what we have shown in the proof is that this solution
satisfies the necessary positivity conditions.
\end{rem}

We will now prove the second part of the theorem. In analogy with
the first one, we will find $\overline{\xi}$, $\overline{Q}(x)=\overline{b}_{0}+\dots+\overline{b}_{r-2}x^{r-2}$
and $x_{1}>\dots>x_{n}$ such that:
\begin{enumerate}
\item $g_{\overline{\xi}}\left(x\right)=\overline{\text{Const}}-\left(x-x_{1}\right)^{2\nu_{1}}\dots\,\left(x-x_{n}\right)^{2\nu_{n}}\overline{Q}\left(x\right)$.
\item $\overline{Q}(x)>0$ for all $x\in\left[0,1\right]$.
\end{enumerate}

Similarly to the previous part, we will look for $\left\{ \overline{\chi}_{k}\right\} _{k=r}^{\nu+r-1}$
and $\overline{P}(x)=\overline{a}_{0}+\dots+\overline{a}_{r-2}x^{r-2}$ satisfying:
\begin{enumerate}
\item $g_{\overline{\chi}}\left(x\right)=\sum_{k}\overline{\chi}_{k}g_{k}\left(x\right)=1-\left(x-x_{1}\right)^{2\nu_{1}}\dots\,\left(x-x_{n}\right)^{2\nu_{n}}\overline{P}\left(x\right)$.
\item $\overline{\chi}_{k}>0$.
\item $\overline{P}\left(x\right)>0$ for all $x\in\left[0,1\right]$.
\end{enumerate}

Note that choosing $\nu=2\nu_{1}+\dots+2\nu_{n}$, $\chi_{k}$ (defined
in \eqref{chidef}) is strictly positive for $r\le k\le\nu+r-1$. Since $P$ was required to be strictly positive, we may hope that also after adding a small perturbation $\left(x_{1},\dots,x_{n}\right)$
around $0$ there still exists a positive solution $\overline{P}$. More precisely,
let us denote by $M_{x_{1},\dots,x_{n}}$ the multiplication by $(x-x_{1})^{2\nu_{1}}\dots\,(x-x_{n})^{2\nu_{n}}$, acting on the polynomials of degree at most $r-2$.
In particular, for $x_{1},\dots,x_{n}=0$ this is $M_{0}$ defined
in \remref{uniquesolforP}. Then, we want to show that the solution
of
\[
\mbox{Pr}M_{x_{1},\dots,x_{n}}\,\overline{P}=\mbox{Pr }1
\]
satisfies the positivity conditions 2 and 3.
By continuity of the determinant, when $(x_1,\dots,x_n)$ is in a small neighborhood of $0$ the operator $\mbox{Pr}M_{x_{1},\dots,x_{n}}$ is invertible. Moreover, in an even smaller neighborhood of $0$ the polynomial $\left(\mbox{Pr}M_{x_1,\dots,x_n}\right)^{-1}\mbox{Pr}1$ will satisfy the positivity condition 3 -- matrix inversion is continuous, and the set of polynomials satisfying this condition is open and contains $\left(\mbox{Pr}M_0\right)^{-1}\mbox{Pr}1$ by the first part of the proof.
Finally, since coordinate projections of $1 - (x-x_{1})^{2\nu_{1}}\dots\,(x-x_{n})^{2\nu_{n}}\left(\mbox{Pr}M_{x_1,\dots,x_n}\right)^{-1}\mbox{Pr}1$ with respect to the basis defined in \claimref{basisofpolynomials} are continuous in $(x_1,\dots,x_n)$, and since for $(x_1,\dots,x_n)=0$ condition 2 is satisfied, by taking $(x_1,\dots,x_n)$ in a further smaller neighborhood of $0$ we are guaranteed to find a polynomial $\overline{P}$ satisfying the required conditions.
\end{proof}

\section{\label{sec:othertransitions}Remarks on Two Other Phase Transitions}

\subsection{More Discontinuities of $\phi_{t}$}

Consider, for example, $r=2$ and $\xi_{k}=\frac{3}{5}\unity_{k=2}+\frac{2}{5}\unity_{k=5}$.
The function $g_{\xi}\left(x\right)$ is maximal at $g_{\xi}\left(0\right)=\frac{6}{5}$,
then it has a local minimum, followed by a local maximum (see \figref{secondphasetransition}).
\begin{figure}
\includegraphics[width=1\linewidth]{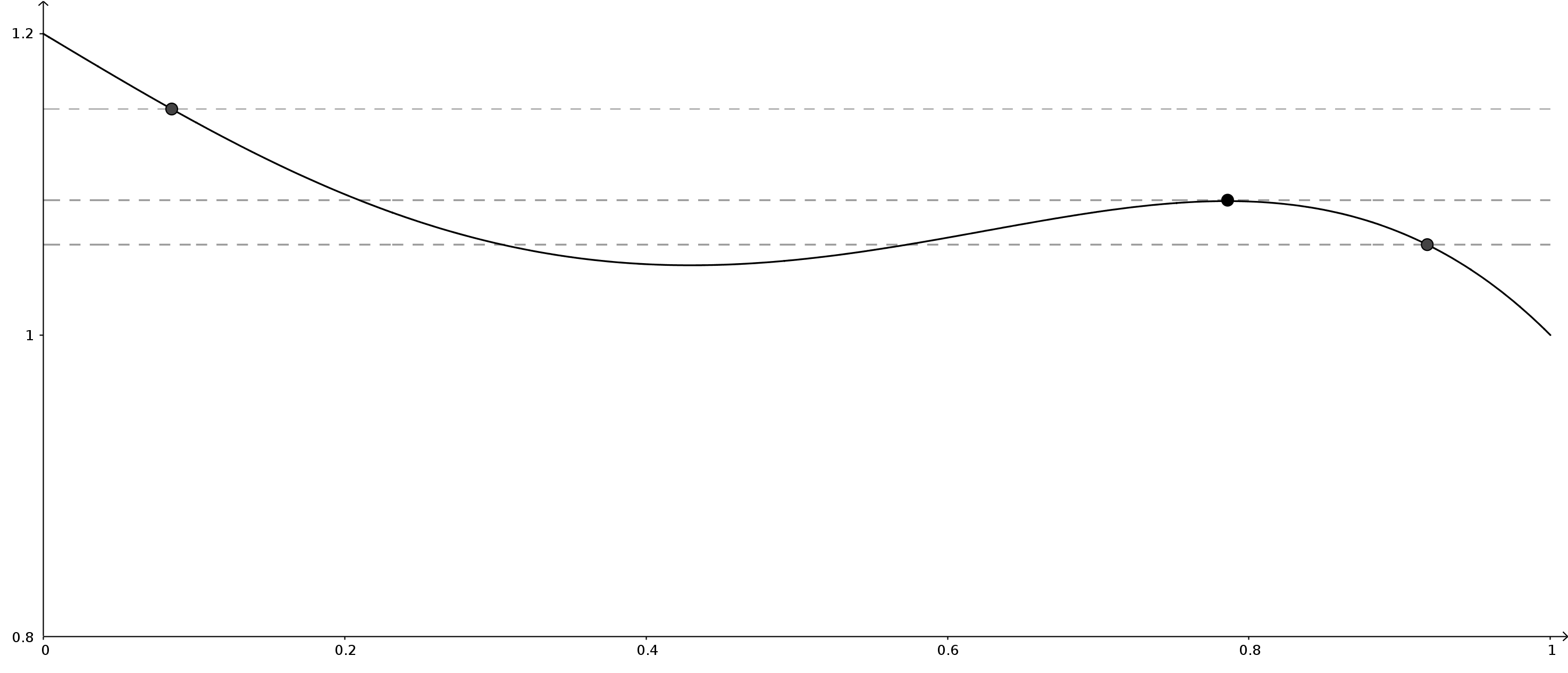}

\caption{\label{fig:secondphasetransition}$g_{\xi}$ for $r=2$ and $\xi_{k}=\frac{3}{5}\protect\unity_{k=2}+\frac{2}{5}\protect\unity_{k=5}$.
We show three lines $\frac{1}{q}$ for three parameters $q$, intersecting
$g_{\xi}$ at $\phi_{\infty}^{\xi}$. One sees here the discontinuity
when $\frac{1}{q}$ equals the value of $g_{\xi}$ at the local maximum.}

\end{figure}
 In this case, recalling Fact \ref{fact:gandcriticallity}, $\phi_{t}^{\xi}$ will have a discontinuity at this
local maximum, that is, a second phase transition occurs. We may then
expect that one can find $\xi$ giving rise to as many (decreasing)
local maxima of $g_{\xi}$ as we wish:
\begin{conjecture}
\label{conj:otherphasetransitions}Let $\nu_{1}^{\left(1\right)},\dots,\nu_{n_{1}}^{\left(1\right)},\nu_{1}^{\left(2\right)},\dots,\nu_{n_{2}}^{\left(2\right)},\dots,\nu_{n_{m}}^{\left(m\right)}$.
Then there exists $g_{\xi}$, $\left\{ q_{i}\right\} _{i=1}^{m},\left\{ x_{j}^{\left(i\right)}\right\} _{1\le i\le m,\,1\le j\le n_{i}}$
such that $q_{i}$ is a critical point which is $\left(\nu_{1}^{\left(i\right)},\dots,\nu_{n_{i}}^{\left(i\right)}\right)$-metastable
at $x_{1}^{\left(i\right)},\dots,x_{n_{i}}^{\left(i\right)}$.
\end{conjecture}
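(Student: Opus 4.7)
The plan is to extend the perturbative strategy used in the proof of Theorem \ref{thm:everythingispossible}(2). The objective is to construct an offspring distribution $\xi$ whose $g_\xi$ has $m$ layers of interior local maxima: layer $i$ consists of $n_i$ peaks at $x_1^{(i)}>\dots>x_{n_i}^{(i)}$, all of common height $h_i$, with local expansion $g_\xi(x) = h_i - C_{i,j}(x - x_j^{(i)})^{2\nu_j^{(i)}} + o((x-x_j^{(i)})^{2\nu_j^{(i)}})$ and $h_1 > h_2 > \dots > h_m > 1$. Given such a $g_\xi$, Fact \ref{fact:gandcriticallity} and Theorem \ref{thm:zeologyofmetastabilities} together yield the conjecture with $q_i = 1/h_i$, and we are free to choose the heights $h_i$ and the positions $x_j^{(i)}$ as part of the construction.

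I would first simplify by taking all heights close to $1$: set $h_i = 1 + \varepsilon_i$ with $\varepsilon_1 > \dots > \varepsilon_m > 0$ small, and let the clusters be centered at $m$ distinct base points $y_1,\dots,y_m \in (0,1)$. By Claim \ref{claim:constantg}, the sequence $\chi_k^{(0)} = \frac{r-1}{k(k-1)}$ gives $g_{\chi^{(0)}} \equiv 1$, which serves as the baseline. The search is then for $\chi$ of the form $\chi = \chi^{(0)} + \sum_i \delta_i \eta^{(i)}$, where each $\eta^{(i)}=(\eta^{(i)}_k)_{k\ge r}$ is a finitely supported nonnegative sequence chosen so that $\sum_k \eta^{(i)}_k g_k(x)$ encodes the Hermite data of layer $i$ relative to the baseline, and the $\delta_i > 0$ are small enough that $\chi_k \ge 0$ for every $k$.

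To choose $\eta^{(i)}$, I would first solve the analogous problem for a single collapsed cluster: given $y_i \in (0,1)$ and a target multiplicity $2\mu_i = 2\sum_j \nu_j^{(i)}$, find a nonnegative $\eta^{(i)}$ so that $\sum_k \eta^{(i)}_k g_k(x)$ agrees with a prescribed polynomial of the form $(x - y_i)^{2\mu_i} P_i(x)$ with $P_i(y_i)>0$. This is exactly the Hermite-interpolation-in-the-$g_k$-basis problem handled in Remark \ref{rem:uniquesolforP}, but with the interpolation node at $y_i$ rather than at $0$; using Claim \ref{claim:basisofpolynomials} to set up the corresponding projection operator $\mathrm{Pr}\,M_{y_i}$ and invoking continuity of matrix inversion together with openness of the positivity conditions, one should obtain such an $\eta^{(i)}$ for each $y_i$ in turn. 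Splitting each $y_i$ back into the full cluster $x_1^{(i)},\dots,x_{n_i}^{(i)}$ is then a further small perturbation, handled by the same continuity argument as in the theorem.

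The main obstacle is the interaction between the $\eta^{(i)}$ for different $i$: although each $\eta^{(i)}$ is designed to supply the correct Hermite data at its own cluster, it is a polynomial defined on all of $[0,1]$ and therefore contributes nonzero values and derivatives at the \emph{other} clusters $y_{i'}$. As a result the Hermite conditions at all $m$ layers must be solved \emph{simultaneously} as a coupled linear system in the $\eta^{(i)}$-coefficients and the amplitudes $\delta_i$, and one must show that this coupled system has a solution lying in the positive cone for every combinatorial input $(\nu_j^{(i)})$. A promising route is a two-scale ansatz in which the $\varepsilon_i$ have very different orders of magnitude, so that perturbations at layer $i+1$ do not disturb the already-determined perturbation at layer $i$ at leading order, closing the argument by a contraction/fixed-point iteration. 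Verifying that such an iteration stays inside the open positivity region, uniformly in the combinatorial data, is the delicate point on which the conjecture ultimately rests and is what genuinely goes beyond the single-layer argument of Theorem \ref{thm:everythingispossible}(2).
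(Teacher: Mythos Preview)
The statement you are addressing is presented in the paper as a \emph{conjecture}, not a theorem; the paper offers no proof of it, only the motivating example of Figure~\ref{fig:secondphasetransition}. There is therefore nothing in the paper to compare your proposal against.

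Your outline is a plausible extension of the perturbative strategy of Theorem~\ref{thm:everythingispossible}(2), and you correctly isolate the essential new difficulty: the cross-contamination between layers, which turns the single Hermite-interpolation problem into a coupled system whose solvability inside the positive cone must be established for \emph{every} combinatorial input. The two-scale ansatz you propose is a natural line of attack, but as you yourself say in the final paragraph, the contraction argument is not carried out. What you have written is a proof \emph{strategy}, not a proof, and the paper's choice to label the statement a conjecture suggests that the author too did not see how to close this gap.

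There is also a second, smaller gap you pass over. You write that ``Fact~\ref{fact:gandcriticallity} and Theorem~\ref{thm:zeologyofmetastabilities} together yield the conjecture with $q_i = 1/h_i$'', but Theorem~\ref{thm:zeologyofmetastabilities} and the very definition of $(\nu_1,\dots,\nu_n)$-metastability are formulated only for the global critical point $q_c = 1/\max g_\xi$, where $\phi_t^\xi \to 0$. At a secondary local maximum of $g_\xi$ the recursion does not drive $\phi_t^\xi$ to $0$ but to the next lower fixed point, so condition~3 of the metastability definition (with $x_{n+1}=0$) must be reformulated, and the appendix estimates must be re-applied on the appropriate interval. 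This extension is presumably routine---the appendix results are local in nature---but it is not literally covered by what the paper proves and should be stated explicitly.
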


\subsection{Percolation of Infection}

Another possible phase transition, studied in \cite{fontesschonmann2008bootstraptreesecondtransition}
for the case of regular trees, is when infinite infected clusters
start to appear, but the prevalence is still smaller than $1$. Following
the proof of Proposition 3.9 in  \cite{fontesschonmann2008bootstraptreesecondtransition},
one sees that it applies also for the bootstrap percolation on GW
trees, showing that the critical probability $q_{c}^{\left(\infty\right)}$
above which infinite clusters no longer appear is strictly bigger
than $q_{c}$ defined in \eqref{qcdef}, unless $\xi_{k}=\unity_{r}$.

\section{More Questions}

The problem of bootstrap percolation in disordered systems raises many
questions. Related to the work presented here, one may be interested
in the metastable regime for other systems, such as $G_{n,p}$ or
the random regular graph. Another natural problem is the analysis
of the bootstrap percolation on the random graph with a given degree
sequence, that has a GW local structure, with analogy to the regular
tree structure of the random regular graph.

\section*{Acknowledgments}

I would like to thank Cristina Toninelli for the introduction of the
subject and for useful discussion, and to Lucas Benigni for his help
throughout the writing of this paper.

\appendix
\section*{Appendix}
\renewcommand{\theequation}{A\arabic{equation}}
\renewcommand{\thethm}{A\arabic{thm}}
\renewcommand{\theassumption}{A\arabic{assumption}}
\renewcommand{\thefact}{A\arabic{fact}}
\renewcommand{\thelem}{A\arabic{lem}}
\renewcommand{\thedefn}{A\arabic{defn}}

\setcounter{thm}{0}
\setcounter{fact}{0}
\setcounter{defn}{0}

This paper concerns with the analysis of a phase transition originating
in the appearance of a new fixed point for a certain recurrence relation,
i.e., a bifurcation. In this appendix, we will try to understand in
a more general context the time scaling in systems of that type. Let
us then consider a sequence of reals $\left\{ x_{n}\right\} _{n=0}^{\infty}$
, defined by the value $x_{0}$ and a recursion formula for $n>0$:
\begin{equation}\label{eq:appendixrecursion}
x_{n}=f(x_{n-1}).
\end{equation}

We will also fix now some positive $\delta<1$, that will be used
throughout this appendix as the window around the new fixed point
in which we are interested.

First, we will study the time scaling at the bifurcation point, when
the new fixed point is first created. In this case, we may expect
$f$ to be tangent to the identity function at the fixed point, so
we will start our discussion with the following assumptions:

\begin{assumption}\label{asmp:nearfixedpointexponent}

$f$ has a fixed point $y_{0}$, such that for $y\in\left(y_{0},y_{0}+\delta\right)$:

\[
y-\underline{c}\left(y-y_{0}\right)^{\alpha}\le f(y)\le y-\overline{c}\left(y-y_{0}\right)^{\alpha},
\]
for some $\alpha>1$, $0<\overline{c}\le\underline{c}<\delta^{-\left(\alpha-1\right)}$.

\end{assumption}

\begin{assumption}\label{asmp:startingclosetofixedpoint}

$x_{0}\in\left(y_{0},y_{0}+\delta\right)$.

\end{assumption}

We first mention the following fact:
\begin{fact}
\label{fact:decreasingandbounded}The sequence is decreasing and bounded
from below by $y_{0}$.\end{fact}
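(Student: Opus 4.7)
The plan is to prove both claims simultaneously by a single induction on $n$, showing that for every $n$ one has $x_n \in (y_0, y_0+\delta)$ and $x_{n+1} < x_n$. The base case $x_0 \in (y_0, y_0+\delta)$ is \assumpref{startingclosetofixedpoint} (or really just Assumption \ref{asmp:startingclosetofixedpoint}).

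For the inductive step, assume $x_n \in (y_0, y_0+\delta)$. The decreasing part is immediate from the upper bound in Assumption \ref{asmp:nearfixedpointexponent}: since $x_n - y_0 > 0$ and $\overline{c}>0$,
\[
x_{n+1} = f(x_n) \le x_n - \overline{c}(x_n - y_0)^\alpha < x_n,
\]
which in particular also gives $x_{n+1} < y_0 + \delta$. The nontrivial half is the lower bound $x_{n+1} > y_0$. Writing $u = x_n - y_0 \in (0,\delta)$ and using the lower bound from Assumption \ref{asmp:nearfixedpointexponent},
\[
x_{n+1} - y_0 \ge u - \underline{c}\, u^{\alpha} = u\bigl(1 - \underline{c}\, u^{\alpha-1}\bigr).
\]
Since $u < \delta$ and $\alpha > 1$, we have $u^{\alpha-1} < \delta^{\alpha-1}$, and the hypothesis $\underline{c} < \delta^{-(\alpha-1)}$ gives $\underline{c}\, u^{\alpha - 1} < \underline{c}\, \delta^{\alpha-1} < 1$. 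Hence $x_{n+1} - y_0 > 0$, closing the induction.

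The only real subtlety is the role of the constraint $\underline{c} < \delta^{-(\alpha-1)}$: this is precisely what prevents $f$ from overshooting the fixed point within one step, and without it the sequence could in principle cross below $y_0$. Everything else is a direct unpacking of the two-sided inequality in Assumption \ref{asmp:nearfixedpointexponent}. Since the argument is short and purely algebraic, I do not anticipate any technical obstruction.
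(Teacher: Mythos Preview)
Your proof is correct and follows essentially the same approach as the paper: both use the upper bound in Assumption~\ref{asmp:nearfixedpointexponent} to get $x_{n+1}<x_n$ and then factor $x_{n+1}-y_0 \ge (x_n-y_0)\bigl(1-\underline{c}(x_n-y_0)^{\alpha-1}\bigr)$, invoking $\underline{c}<\delta^{-(\alpha-1)}$ to conclude positivity. Your explicit framing as an induction and your remark on the role of the constraint $\underline{c}<\delta^{-(\alpha-1)}$ are welcome clarifications but do not differ from the paper's argument.
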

\begin{proof}
By Assumption \ref{asmp:nearfixedpointexponent}, $x_{n+1}<x_{n}$
whenever $x_{n}\in\left(y_{0},y_{0}+\delta\right)$. Moreover,
\begin{eqnarray*}
x_{n+1}-y_{0} & \ge & x_{n}-y_{0}-\underline{c}\left(x_{n}-y_{0}\right)^{\alpha}\\
 & = & \left(x_{n}-y_{0}\right)\left(1-\underline{c}\left(x_{n}-y_{0}\right)^{\alpha-1}\right)\\
 & \ge & \left(x_{n}-y_{0}\right)\left(1-\underline{c}\delta^{\alpha-1}\right)\\
 & > & 0.
\end{eqnarray*}

Therefore, since $x_{0}\in\left(y_{0},y_{0}+\delta\right)$ by assumption
\ref{asmp:startingclosetofixedpoint}, the entire sequence is in the
interval $\left(y_{0},y_{0}+\delta\right)$, and it is decreasing.
\end{proof}
The following theorem will describe the asymptotic of the sequence:
\begin{thm}
\label{thm:timescalingatbifurcationpoint}
Let $\{x_n\}_{n=0}^\infty$ be the sequence defined in \eqref{appendixrecursion}, satisfying Assumptions \ref{asmp:nearfixedpointexponent} and \ref{asmp:startingclosetofixedpoint}. Then
\[
y_{0}+\underline{a}\left(n+n_{0}\right)^{-\frac{1}{\alpha-1}}\le x_{n}\le y_{0}+\overline{a}n^{-\frac{1}{\alpha-1}},
\]
where $\underline{a}=\left[\left(\alpha-1\right)\left(1-\delta \right)^{-\alpha}\underline{c}\right]^{-\frac{1}{\alpha-1}}$,
$\overline{a}=\left[(\alpha-1)\overline{c}\right]^{-\frac{1}{\alpha-1}}$,
and $n_{0}=\frac{(x_0-y_0)^{1-\alpha}}{\left(\alpha-1\right)\left(1-\delta \right)^{-\alpha}\underline{c}}$ are
all positive constants.\end{thm}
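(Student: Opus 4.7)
The plan is to rewrite everything in terms of $u_n := x_n - y_0$, which by Fact A.1 lies in $(0,\delta)$ and is strictly decreasing. Assumption A.1 becomes
\[
u_n - \underline{c}\,u_n^{\alpha} \;\le\; u_{n+1} \;\le\; u_n - \overline{c}\,u_n^{\alpha},
\]
and the heuristic is that the continuous analogue $\dot u=-c\,u^{\alpha}$ has the solution for which $u(t)^{-(\alpha-1)}$ depends linearly on $t$. I would therefore introduce $v_n := u_n^{-(\alpha-1)}$ and try to sandwich the increments $v_{n+1}-v_n$ between two positive constants, obtaining the claimed rate after inversion.

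For the upper bound on $x_n$, factor $u_{n+1}\le u_n\bigl(1-\overline{c}\,u_n^{\alpha-1}\bigr)$ and take the $-(\alpha-1)$-th power. Bernoulli's inequality $(1-z)^{-(\alpha-1)}\ge 1+(\alpha-1)z$, valid for $z\in[0,1)$ and $\alpha>1$, combined with the convenient identity $v_n\cdot\overline{c}\,u_n^{\alpha-1}=\overline{c}$, gives $v_{n+1}\ge v_n+(\alpha-1)\overline{c}$. Telescoping (and discarding the nonnegative initial term $v_0$) yields $v_n\ge(\alpha-1)\overline{c}\,n$, which inverts exactly to $u_n\le\overline{a}\,n^{-1/(\alpha-1)}$.

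For the lower bound, the symmetric manipulation gives $v_{n+1}\le v_n\bigl(1-\underline{c}\,u_n^{\alpha-1}\bigr)^{-(\alpha-1)}$. Here I would use the mean value estimate
\[
(1-z)^{-(\alpha-1)}-1 \;\le\; (\alpha-1)\,z\,(1-z)^{-\alpha},\qquad z\in[0,1),
\]
which follows from $\tfrac{d}{dz}(1-z)^{-(\alpha-1)}=(\alpha-1)(1-z)^{-\alpha}$ and the monotonicity of the integrand. With $z=\underline{c}\,u_n^{\alpha-1}$, the identity $v_n z=\underline{c}$ again cancels the $v_n$ factor, and the uniform bounds $u_n<\delta$ together with the hypothesis $\underline{c}<\delta^{-(\alpha-1)}$ keep $1-z$ bounded away from zero; after this uniform control, one obtains $v_{n+1}-v_n \le (\alpha-1)(1-\delta)^{-\alpha}\,\underline{c}$. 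Telescoping gives $v_n\le v_0+(\alpha-1)(1-\delta)^{-\alpha}\underline{c}\,n$, and the shift $n_0$ in the theorem is chosen precisely so that $v_0=(\alpha-1)(1-\delta)^{-\alpha}\underline{c}\cdot n_0$, absorbing the initial condition; inversion then delivers $u_n\ge \underline{a}(n+n_0)^{-1/(\alpha-1)}$.

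The main obstacle, such as it is, lies in obtaining a uniform (in $n$) upper bound on $v_{n+1}-v_n$: one must verify that the denominator $(1-\underline{c}\,u_n^{\alpha-1})^{-\alpha}$ never blows up along the trajectory, which in turn relies essentially on Fact A.1 (monotonicity keeps $u_n<\delta$ for all $n$) and on the quantitative restriction on $\underline{c}$. Once this uniform control is in place, the rest is algebraic bookkeeping with explicit constants, identifying the constants $\underline{a},\overline{a},n_0$ in the theorem statement.
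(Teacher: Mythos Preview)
Your proposal is correct and follows essentially the same route as the paper. The paper introduces the very same transformed sequence $t_n=(x_n-y_0)^{1-\alpha}$ (your $v_n$), applies the same pair of elementary inequalities for $(1-z)^{1-\alpha}$ to obtain $t_{n-1}+(\alpha-1)\overline{c}\le t_n\le t_{n-1}+(\alpha-1)(1-\delta)^{-\alpha}\underline{c}$, and then telescopes and inverts exactly as you describe; your identification of $n_0$ via $v_0$ matches the paper's constant as well.
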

\begin{proof}
Let us first define a sequence $t_{n}=\left(x_{n}-y_{0}\right)^{1-\alpha}$,
and note that $t_{n}$ is positive for all $n$. Then using Fact \ref{fact:decreasingandbounded}
and Assumption \ref{asmp:nearfixedpointexponent}, fixing \textbf{$\underline{c}^{\prime}=\left(\alpha-1\right)\left(1-\delta \right)^{-\alpha}\underline{c}$}
and $\overline{c}^{\prime}=\left(\alpha-1\right)\overline{c}$, we can estimate:

\begin{center}
\begin{tabular}{cc}
$t_{n}=\left(f\left(x_{n-1}\right)-y_{0}\right)^{1-\alpha}$\hphantom{aaaaaaaaaaaaaa} & $t_{n}=\left(f\left(x_{n-1}\right)-y_{0}\right)^{1-\alpha}$\hphantom{aaaaaaaaaaaaaa}\tabularnewline
$\le\left(x_{n-1}-\underline{c}\left(x_{n-1}-y_{0}\right)^{\alpha}-y_{0}\right)^{1-\alpha}$ & $\ge\left(x_{n-1}-\overline{c}\left(x_{n-1}-y_{0}\right)^{\alpha}-y_{0}\right)^{1-\alpha}$\tabularnewline
$=\left(t_{n-1}^{\frac{1}{1-\alpha}}-\underline{c}t_{n-1}^{\frac{\alpha}{1-\alpha}}\right)^{1-\alpha}$\hphantom{aaaaaaaaaaa} & $=\left(t_{n-1}^{\frac{1}{1-\alpha}}-\overline{c}t_{n-1}^{\frac{\alpha}{1-\alpha}}\right)$\hphantom{aaaaaaaaaaaaaa}
\tabularnewline
$=t_{n-1}\left(1-\underline{c}t_{n-1}^{-1}\right)^{1-\alpha}$\hphantom{aaaaaaaaaaa} & $=t_{n-1}\left(1-\overline{c}t_{n-1}^{-1}\right)^{1-\alpha}$\hphantom{aaaaaaaaaaa}
\tabularnewline
$\le t_{n-1}\left(1+\underline{c}^{\prime}t_{n-1}^{-1}\right)$\hphantom{aaaaaaaaaaaaa} & $\ge t_{n-1}\left(1+\overline{c}^{\prime}t_{n-1}^{-1}\right)$\hphantom{aaaaaaaaaaaaa}
\tabularnewline
$=t_{n-1}+\underline{c}^{\prime}$;\hphantom{aaaaaaaaaaaaaaaaaAa} & $=t_{n-1}+\overline{c}^{\prime}$.\hphantom{aaaaaaaaaaaaaaaaaAa}\tabularnewline
\end{tabular}
\par\end{center}
We have used here the fact that, for any $0<z<\delta<1$, we can approximate
$\left(1-z\right)^{1-\alpha}$ using its derivatives at $0$ and at $\delta$:
\[
-(1-\alpha) \le \frac{\left(1-z\right)^{1-\alpha}-1}{z} \le -(1-\alpha) (1-\delta)^{-\alpha}.
\]
We then also use $\underline{c}t_{n-1}^{-1}=\left(x_{n}-y_{0}\right)^{\alpha-1}<\delta^{\alpha-1}<\delta$.

Finally,

\begin{center}
\begin{tabular}{cc}
$x_{n}=y_{0}+t_{n}^{-\frac{1}{\alpha-1}}$
\hphantom{aaaaaaaaaaaaaaaaaaaaa} & $x_{n}\ge y_{0}+\left(\left(x_{0}-y_{0}\right)^{1-\alpha}+\underline{c}^{\prime}n\right)^{-\frac{1}{\alpha-1}}$
\tabularnewline
$\le y_{0}+\left(\left(x_{0}-y_{0}\right)^{1-\alpha}+\overline{c}^{\prime}n\right)^{-\frac{1}{\alpha-1}}$ & $= y_{0}+\left(\underline{c}^{\prime}\left(n+\frac{(x_0-y_0)^{1-\alpha}}{\underline{c}^{\prime}}\right)\right)^{-\frac{1}{\alpha-1}}$\hphantom{a}
\tabularnewline
$\le y_{0}+\overline{a}n^{-\frac{1}{\alpha-1}};$
\hphantom{aaaaaaaaaaaaaaa} & $=y_{0}+\underline{a}\left(n+n_{0}\right)^{-\frac{1}{\alpha-1}}.$\hphantom{aaaaaaaaaa}
\tabularnewline
\end{tabular}
\par\end{center}

\end{proof}
Next, we will be interested in the behavior near the bifurcation point,
just before the new fixed point appears. For this purpose we will
consider a family $\left\{ x_{n}^{\varepsilon}\right\} _{n=0}^{\infty}$
of sequences, each defined by the value $x_{0}^{\varepsilon}$ and
a recursion formula for $n>0$:
\begin{equation}\label{eq:appendixrecursion2}
x^\varepsilon_{n}=f_{\varepsilon}\left(x^\varepsilon_{n-1}\right),
\end{equation}
and assume:

\begin{assumption}\label{asmp:nearfixedpointexponent2}

There is a point $y_{0}$ such that for $\left|y-y_{0}\right|<\delta$ and $\varepsilon<\varepsilon_{0}$

\[
y-\underline{c}\left(y-y_{0}\right)^{2\alpha}-\varepsilon\le f_{\varepsilon}\left(y\right)\le y-\overline{c}\left(y-y_{0}\right)^{2\alpha}-\varepsilon,
\]
for an integer $\alpha>1$ and positive constants $\overline{c}$ and $\underline{c}$.

\end{assumption}

\begin{assumption}\label{asmp:startingclosetofixedpoint2}

$0<x_{0}-y_{0}<\delta$.

\end{assumption}

In order to study the asymptotic behavior of $x^\varepsilon_{n}$ for small values
of $\varepsilon$, we will need the following definition:
\begin{defn}\label{def:exittime}
The exit time $N_{\delta}(\varepsilon)$ is the minimal $n$ such
that $x^\varepsilon_{n}<y_{0}-\delta$.

Replacing Fact \ref{fact:decreasingandbounded} will be the following:\end{defn}
\begin{fact}
\label{fact:decreasingtoinf}For all $\varepsilon<\varepsilon_{0}$,
$N_{\delta}(\varepsilon)$ is finite, and for $n<N_{\delta}(\varepsilon)$
the sequence $x^\varepsilon_{n}$ is decreasing.\end{fact}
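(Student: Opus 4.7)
The plan is to exploit the uniform $-\varepsilon$ term in Assumption \ref{asmp:nearfixedpointexponent2}: as long as $x_n^\varepsilon$ sits in the window $(y_0-\delta,y_0+\delta)$, each iterate of $f_\varepsilon$ drops the sequence by at least $\varepsilon$. This simultaneously forces monotonicity (so the sequence can only leave the window from below) and caps the time spent inside the window.

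First I would establish by induction that while $x_k^\varepsilon\in(y_0-\delta,y_0+\delta)$ for all $k\le n$, the sequence is strictly decreasing. Indeed, the upper bound in Assumption \ref{asmp:nearfixedpointexponent2} gives
\[
x_{n+1}^\varepsilon = f_\varepsilon(x_n^\varepsilon) \le x_n^\varepsilon - \overline{c}(x_n^\varepsilon - y_0)^{2\alpha} - \varepsilon < x_n^\varepsilon,
\]
since $2\alpha$ is even (so the power term is nonnegative) and $\varepsilon>0$. By Assumption \ref{asmp:startingclosetofixedpoint2} the starting point lies in the window, so the induction step needs only to check that $x_{n+1}^\varepsilon$ does not overshoot the window on either side before reaching $y_0-\delta$; monotonicity rules out escape through the upper boundary, and escape through the lower boundary is exactly the event defining $N_\delta(\varepsilon)$.

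To get finiteness, I would iterate the bound $x_{n+1}^\varepsilon\le x_n^\varepsilon-\varepsilon$, valid for every $n<N_\delta(\varepsilon)$ by the previous step. Telescoping yields $x_n^\varepsilon \le x_0^\varepsilon - n\varepsilon$ as long as $n<N_\delta(\varepsilon)$, and since $x_0^\varepsilon<y_0+\delta$, this forces $x_n^\varepsilon<y_0-\delta$ once $n>\tfrac{2\delta}{\varepsilon}$. Hence
\[
N_\delta(\varepsilon)\le \left\lceil \tfrac{2\delta}{\varepsilon}\right\rceil<\infty,
\]
which gives both assertions of the fact.

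There is no real obstacle here — the proof is a one-line consequence of the $-\varepsilon$ drift, contrasting with Fact \ref{fact:decreasingandbounded}, where the absence of such a drift required the more delicate estimate ruling out the sequence crossing $y_0$. The only mild care is to note that $2\alpha$ is an even exponent so that the $(y-y_0)^{2\alpha}$ term does not change sign when $y<y_0$, which is why the bound $x_{n+1}^\varepsilon\le x_n^\varepsilon-\varepsilon$ remains valid on the full window.
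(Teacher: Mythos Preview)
Your proof is correct and follows the same approach as the paper: use the upper bound in Assumption~\ref{asmp:nearfixedpointexponent2} to get strict decrease inside the window, then conclude that the sequence can only leave through the lower boundary. Your argument is in fact more complete than the paper's, which leaves the finiteness of $N_\delta(\varepsilon)$ implicit; you make it explicit via the telescoping bound $x_n^\varepsilon \le x_0^\varepsilon - n\varepsilon$, yielding the quantitative estimate $N_\delta(\varepsilon)\le\lceil 2\delta/\varepsilon\rceil$.
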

\begin{proof}
By Assumption \ref{asmp:nearfixedpointexponent2}, for $n<N_{\delta}(\varepsilon)$, if
$x^\varepsilon_{n}<y_{0}+\delta$ then $x^\varepsilon_{n+1}<x^\varepsilon_{n}<y_{0}+\delta$. Hence, the sequence remains in the interval $(y_0-\delta,y_0+\delta)$ an long as $n<N_{\delta}(\varepsilon)$.
Since in this interval the sequence is decreasing, the result follows by Assumption \ref{asmp:startingclosetofixedpoint2}.
\end{proof}
For our analysis, we will compare this sequence to the solution of
the following differential equations, that will approximate $x^\varepsilon_{n}-y_{0}$:

\begin{center}
\begin{tabular}{cc}
$\frac{\mbox{d}\underline{\ensuremath{\zeta}}}{\mbox{d}s}=-\underline{c}\,\underline{\ensuremath{\zeta}}^{2\alpha}-\varepsilon,$\hphantom{.} & $\frac{\mbox{d}\overline{\zeta}}{\mbox{d}s}=-\overline{c}\,\overline{\zeta}^{2\alpha}-\varepsilon,$\hphantom{.}\tabularnewline
$\underline{\ensuremath{\zeta}}\left(0\right)=z^\varepsilon_{0}=x^\varepsilon_{0}-y_{0};$ & $\overline{\zeta}\left(0\right)=z^\varepsilon_{0}=x^\varepsilon_{0}-y_{0}.$\tabularnewline
\end{tabular}
\par\end{center}

The solution $\overline{\zeta}$ is strictly decreasing, and in particular
one can define its inverse $\overline{t}:\,\left[-\infty,z^\varepsilon_{0}\right]\rightarrow\left[0,\infty\right]$,
and $\overline{\tau}_{n}=\overline{t}\left(x^\varepsilon_{n}-y_{0}\right)$.
$\underline{t}$ and $\underline{\tau}_n$ will be defined in the same manner.
Note that these all depend on $\varepsilon$, even though this dependence is omitted from the notation.
The next lemma will show that the continuous crossing times $\overline{\tau}_n$ and $\underline{\tau}_n$ are close to the discrete one, namely $n$.
\begin{lem}
\label{lem:comparetodiffeq}For all $n\le N_{\delta}(\varepsilon)$,
\[
\left(1-\kappa_{\overline{c},\delta,\varepsilon}\right)n\le\overline{\tau}_{n}\le\underline{\tau}_{n}\le\left(1+\kappa_{\underline{c},\delta,\varepsilon}\right)n,
\]
where for all $c>0$, $\kappa_{c,\delta,\varepsilon_0} = \max(C_4 \varepsilon^{2\alpha-1},2\alpha\delta ^{2\alpha-1})$. $C_4$ is a positive constant depending on $\delta,c$ and $\varepsilon_0$ given explicitly in the proof, and bounded when $\delta$ and $\varepsilon_0$ are not too big. For example, if $\varepsilon_0<1$ and $c\delta^{2\alpha-1}<\frac{1}{2}$, $C_4 < (3+ 4^\alpha c)^{4\alpha}$.
\end{lem}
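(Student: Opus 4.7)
The plan is to show that each discrete time step corresponds closely to one unit of continuous time in the comparison ODE, and then telescope. The key identity comes from the inverse function theorem applied to $\bar\zeta$: differentiating $\bar\zeta \circ \bar t = \mathrm{id}$ and using the ODE gives $\bar t'(z) = -(\bar c z^{2\alpha} + \varepsilon)^{-1}$, and hence
\[
\bar\tau_{n+1} - \bar\tau_n \;=\; \int_{z^\varepsilon_{n+1}}^{z^\varepsilon_n} \frac{dz}{\bar c z^{2\alpha} + \varepsilon}.
\]
By Assumption \ref{asmp:nearfixedpointexponent2} the step $\Delta_n := z^\varepsilon_n - z^\varepsilon_{n+1}$ satisfies $\bar c (z^\varepsilon_n)^{2\alpha} + \varepsilon \le \Delta_n \le \underline c (z^\varepsilon_n)^{2\alpha} + \varepsilon$, so if the integrand were frozen at its value at $z = z^\varepsilon_n$ the increment would evaluate to $\Delta_n / (\bar c (z^\varepsilon_n)^{2\alpha} + \varepsilon)$, which is already close to $1$. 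The remaining task is to bound the relative variation of the integrand across this short integration window.

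For that variation, I would use $|z^{2\alpha} - (z^\varepsilon_n)^{2\alpha}| \le 2\alpha\, \max(|z|, |z^\varepsilon_n|)^{2\alpha-1}\, |z - z^\varepsilon_n|$ together with $|z - z^\varepsilon_n| \le \Delta_n \le \underline c \delta^{2\alpha} + \varepsilon$, and split into two regimes. When $(z^\varepsilon_n)^{2\alpha} \gtrsim \varepsilon$, the $z^{2\alpha}$ term dominates both numerator and denominator, the step obeys $\Delta_n / |z^\varepsilon_n| \lesssim \underline c \delta^{2\alpha-1}$, and the relative error on the integrand is of order $\delta^{2\alpha-1}$ --- this is the $2\alpha\delta^{2\alpha-1}$ contribution to $\kappa_{c,\delta,\varepsilon}$. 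When $(z^\varepsilon_n)^{2\alpha} \lesssim \varepsilon$, the denominator is $\approx \varepsilon$ and the step is $\approx \varepsilon$, so their ratio is close to $1$ and the correction comes from the small residual $z^{2\alpha}$ piece, producing the $C_4\varepsilon^{2\alpha-1}$ term. The identical argument for the $\underline c$-ODE produces the analogous bound $|\underline\tau_{n+1} - \underline\tau_n - 1| \le \kappa_{\underline c,\delta,\varepsilon}$.

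Telescoping these per-step estimates over $n < N_\delta(\varepsilon)$ (valid by Fact \ref{fact:decreasingtoinf}) yields $|\bar\tau_n - n| \le \kappa_{\bar c,\delta,\varepsilon}\, n$ and $|\underline\tau_n - n| \le \kappa_{\underline c,\delta,\varepsilon}\, n$. Combining these with the monotone comparison between $\bar\zeta$ and $\underline\zeta$ coming from $\bar c \le \underline c$ (solutions starting from the same $z^\varepsilon_0$ with the larger drift decrease faster, hence the corresponding $\tau$'s are ordered) yields the full chain of inequalities in the lemma.

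The main obstacle is the crossover window where $|z^\varepsilon_n|$ is of order $\varepsilon^{1/(2\alpha)}$ and the sequence may pass through zero: here $z^{2\alpha}$ fails to be monotone on the integration interval, so the variation bound above has to be applied with some care --- but the evenness of $2\alpha$ keeps the estimate clean, since $z \mapsto z^{2\alpha}$ is minimized at $z = 0$ with at most a bounded excursion on each side. Once the regime split is in place, extracting the explicit constant $C_4 \lesssim (3+4^\alpha c)^{4\alpha}$ is a careful but routine accounting, with no substantively new ideas required beyond those above.
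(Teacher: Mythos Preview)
Your proposal is correct and follows essentially the same route as the paper: write $\tau_{n+1}-\tau_n$ as $\int_{z_{n+1}}^{z_n}(c z^{2\alpha}+\varepsilon)^{-1}\,dz$, extract a main term equal to $1$ by freezing the integrand at $z_n$, and bound the residual variation by splitting into the near-zero regime (producing $C_4\varepsilon^{2\alpha-1}$) and the away-from-zero regime (producing $2\alpha c\,\delta^{2\alpha-1}$), then telescope. The paper differs only in cosmetics: it first replaces the actual endpoint $z_{n+1}$ by the extremal one $z_n-cz_n^{2\alpha}-\varepsilon$ via monotonicity of $t$, so the frozen term is exactly $1$ rather than merely close to $1$; and in the away-from-zero case it invokes convexity of $w\mapsto(cw^{2\alpha}+\varepsilon)^{-1}$ instead of your mean-value Lipschitz bound.

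One small overstatement to fix: your two-sided claims $|\bar\tau_n-n|\le\kappa_{\bar c}\,n$ and $|\underline\tau_n-n|\le\kappa_{\underline c}\,n$ are not both available, because the frozen ratio $\Delta_n/(\bar c\,z_n^{2\alpha}+\varepsilon)$ can exceed $1$ by a factor as large as $\underline c/\bar c$ (and dually for $\underline\tau$). Only the one-sided inequalities $\bar\tau_n\ge(1-\kappa_{\bar c})n$ and $\underline\tau_n\le(1+\kappa_{\underline c})n$ follow from your per-step estimate --- but those are precisely what the lemma asserts, so the argument goes through once you drop the unnecessary opposite bounds.
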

\begin{proof}
Let $z_{n}=x_{n}-y_{0}$. Then
\begin{eqnarray*}
\underline{\tau}_{n} & = & \underline{t}\left(f_\varepsilon \left(x_{n-1}\right)-y_{0}\right)\\
 & \le & \underline{t}\left(z_{n-1}-\underline{c}z_{n-1}^{2\alpha}-\varepsilon\right)\\
 & = & \int\limits _{z_{0}}^{z_{n-1}-\underline{c}z_{n-1}^{2\alpha}-\varepsilon}\frac{\mbox{d}z}{-\underline{c}z^{2\alpha}-\varepsilon}\\
 & = & \underline{t}\left(z_{n-1}\right)-\int\limits _{z_{n-1}}^{z_{n-1}-\underline{c}z_{n-1}^{2\alpha}-\varepsilon}\frac{\mbox{d}z}{\underline{c}z_{n-1}^{2\alpha}+\varepsilon}-\int\limits _{z_{n-1}}^{z_{n-1}-cz_{n-1}^{2\alpha}-\varepsilon}\left(\frac{\mbox{d}z}{\underline{c}z^{2\alpha}+\varepsilon}-\frac{\mbox{d}z}{\underline{c}z_{n-1}^{2\alpha}+\varepsilon}\right)\\
 & = & \underline{\tau}_{n-1}+1-\int\limits _{z_{n-1}}^{z_{n-1}-\underline{c}z_{n-1}^{2\alpha}-\varepsilon}\left(\frac{\mbox{d}z}{\underline{c}z^{2\alpha}+\varepsilon}-\frac{\mbox{d}z}{\underline{c}z_{n-1}^{2\alpha}+\varepsilon}\right).
\end{eqnarray*}

In order to study the error term, we will use the following estimation:

\begin{claim}
Fix $w_{0}\in(-\delta,\delta)$, and $c>0$. Let
\[
I=\int\limits _{w_{0}-cw_{0}^{2\alpha}-\varepsilon}^{w_{0}}\left(\frac{1}{cw^{2\alpha}+\varepsilon}-\frac{1}{cw_{0}^{2\alpha}+\varepsilon}\right) \mbox{d}w.
\]
Then
\[
\left|I\right|\le \kappa_{c,\delta,\varepsilon_0}.
\]
\end{claim}
\begin{proof}
We will first consider the case in which the integration interval passes through
$0$, that is $0<w_{0}<cw_{0}^{2\alpha}+\varepsilon$. In this case,

\begin{eqnarray*}
w_{0} & \le & w_{0}\left(1-cw_{0}^{2\alpha-1}\right)\left(1-c\delta^{2\alpha-1}\right)^{-1}\le C_{1}\varepsilon,\\
cw_{0}^{2\alpha}+\varepsilon & \le & \left[1+C_{2}\varepsilon^{2\alpha-1}\right]\varepsilon,
\end{eqnarray*}

for $C_{1}=\left(1-c\delta^{2\alpha-1}\right)^{-1}$ and $C_{2}=c\left(1-c\delta^{2\alpha-1}\right)^{-2\alpha}$.

We may then bound the nominator of the integrand for all $w\in\left[w_{0}-cw_{0}^{2\alpha}-\varepsilon,w_{0}\right]$
by

\[
\left|cw_{0}^{2\alpha}+\varepsilon-cw^{2\alpha}-\varepsilon\right|\le cw_{0}^{2\alpha}+cw^{2\alpha}\le C_{3}\varepsilon^{2\alpha},
\]

where $C_{3}=\left(1+C_{2}\varepsilon_{0}^{2\alpha-1}\right)^{2\alpha}+C_{1}^{2\alpha}$.

For the denominator,

\[
\left(cw^{2\alpha}+\varepsilon\right)\left(cw_{0}^{2\alpha}+\varepsilon\right)\ge\varepsilon^{2}.
\]

Putting everything together

\begin{eqnarray*}
\left|I\right| & \le & \int\limits _{w_{0}-cw_{0}^{2\alpha}-\varepsilon}^{w_{0}}\left|\frac{cw_{0}^{2\alpha}+\varepsilon-cw^{2\alpha}-\varepsilon}{\left(cw^{2\alpha}+\varepsilon\right)\left(cw_{0}^{2\alpha}+\varepsilon\right)}\right|\\
 & \le & \left(cw_{0}^{2\alpha}+\varepsilon\right)C_{3}\varepsilon^{2\alpha-2}\\
 & \le & C_{4}\varepsilon^{2\alpha-1},
\end{eqnarray*}

for $C_{4}=\left[1+C_{2}\varepsilon_{0}^{2\alpha-1}\right]C_{3}$.

Next, we consider the case where the integral is over a positive interval,
i.e., $w_{0}\ge cw_{0}^{2\alpha}+\varepsilon$. We can bound the integrand
using convexity -- for all $w\in\left(w_{0}-cw_{0}^{2\alpha}-\varepsilon,w_{0}\right)$

\[
\frac{\frac{1}{cw^{2\alpha}+\varepsilon}-\frac{1}{cw_{0}^{2\alpha}+\varepsilon}}{w-w_{0}}\ge-\frac{2\alpha c w^{2\alpha-1}}{\left(cw^{2\alpha}+\varepsilon\right)^{2}}.
\]

This implies that

\begin{eqnarray*}
|I| & \le & \left(cw_{0}^{2\alpha}+\varepsilon\right)\frac{2\alpha c w^{2\alpha-1}}{\left(cw^{2\alpha}+\varepsilon\right)^{2}}\left(w_{0}-w\right)\\
 & \le & 2\alpha c w^{2\alpha-1}\le 2\alpha c \delta^{2\alpha-1}.
\end{eqnarray*}

We are left with the case $w_{0}\le-cw_{0}^{2\alpha}-\varepsilon$,
which could be analyzed using the exact same argument as the previous
one to obtain the result.
\end{proof}

Using the claim we can continue with our estimation, obtaining
\[
\underline{\ensuremath{\tau}}_{n}\le\underline{\ensuremath{\tau}}_{n-1}+1+\kappa_{\underline{c},\delta,\varepsilon_0},
\]
and proving the upper bound. The lower bound could be found using the
exact same calculation replacing $\underline{c}$ by $\overline{c}$. The result follows since $\overline{c}\le\underline{c}$, and thus
$\overline{\tau}_{n}\le\underline{\tau}_{n}$ by monotonicity of the
integral.
\end{proof}
We are now ready to formulate the final result:
\begin{thm}
Fix a family of sequences (indexed by $\varepsilon$) defined in \eqref{appendixrecursion2} satisfying Assumptions \ref{asmp:nearfixedpointexponent2} and \ref{asmp:startingclosetofixedpoint2}, and consider their exit times $N_{\delta}(\varepsilon)$ (see Definition \ref{def:exittime}).
\label{thm:plateaulength}Let $\overline{I}=\int\limits _{-\infty}^{\infty}\frac{\mbox{d}u}{\overline{c}u^{2\alpha}+1}$,
$\underline{I}=\int\limits _{-\infty}^{\infty}\frac{\mbox{d}u}{\underline{c}u^{2\alpha}+1}$,
and $\kappa_{\delta,0}=\max(\kappa_{\overline{c},\delta,0},\kappa_{\underline{c},\delta,0})$, where $\kappa_{\overline{c},\delta,0}$ and $\kappa_{\underline{c},\delta,0}$ are the positive constants given in Lemma \ref{lem:comparetodiffeq}. Assume further that $\kappa_{\delta,0}<1$. Then

\[
0<\frac{\frac{1}{2}\underline{I}}{\left(1+\kappa_{\delta,0}\right)}\le\liminf_{\varepsilon\rightarrow0}\frac{N_{\delta}(\varepsilon)}{\varepsilon^{-1+\frac{1}{2\alpha}}}\le\limsup_{\varepsilon\rightarrow0}\frac{N_{\delta}(\varepsilon)}{\varepsilon^{-1+\frac{1}{2\alpha}}}\le\frac{\overline{I}}{\left(1-\kappa_{\delta,0}\right)}<\infty.
\]
The factor $\frac{1}{2}$ in front of $\underline{I}$ could be removed when $\varepsilon^{-\frac{1}{2\alpha}}(x^\varepsilon_0-y_0)^\varepsilon \rightarrow \infty$ as $\varepsilon \rightarrow 0$ (e.g., when $x^\varepsilon_0-y_0$ is bounded away from $0$ uniformly in $\varepsilon$).
\end{thm}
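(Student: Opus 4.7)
The plan is to reduce the discrete exit time $N_\delta(\varepsilon)$ to the continuous exit times of the two comparison ODEs via Lemma \ref{lem:comparetodiffeq}, and then evaluate those continuous times by an explicit rescaling of the defining integrals.

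First, since $\overline{t}$ and $\underline{t}$ are decreasing on their domains, Definition \ref{def:exittime} immediately yields $\overline{\tau}_{N_\delta(\varepsilon)-1}\le\overline{t}(-\delta)$ and $\underline{\tau}_{N_\delta(\varepsilon)}\ge\underline{t}(-\delta)$. Plugging these into Lemma \ref{lem:comparetodiffeq}, applied at $n=N_\delta-1$ for the upper bound on $N_\delta$ and at $n=N_\delta$ for the lower bound, squeezes
\begin{equation*}
\frac{\underline{t}(-\delta)}{1+\kappa_{\underline{c},\delta,\varepsilon_0}}\;\le\;N_\delta(\varepsilon)\;\le\;1+\frac{\overline{t}(-\delta)}{1-\kappa_{\overline{c},\delta,\varepsilon_0}}.
\end{equation*}

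Second, I evaluate the continuous times by inverting the ODE flow: $\overline{t}(-\delta)=\int_{-\delta}^{z_0^\varepsilon}(\overline{c}\zeta^{2\alpha}+\varepsilon)^{-1}\,d\zeta$, and similarly for $\underline{t}$. The rescaling $u=\varepsilon^{-1/(2\alpha)}\zeta$ converts this to
\begin{equation*}
\overline{t}(-\delta)=\varepsilon^{-1+1/(2\alpha)}\int_{-\delta\varepsilon^{-1/(2\alpha)}}^{z_0^\varepsilon\varepsilon^{-1/(2\alpha)}}\frac{du}{\overline{c}u^{2\alpha}+1},
\end{equation*}
isolating the correct power of $\varepsilon$ and a universal integrand. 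Because $(cu^{2\alpha}+1)^{-1}$ is positive, even and integrable on $\mathbb{R}$ (thanks to $2\alpha>1$), this is at most $\varepsilon^{-1+1/(2\alpha)}\overline{I}$; the analogous $\underline{t}(-\delta)$ is at least $\varepsilon^{-1+1/(2\alpha)}\int_{-\delta\varepsilon^{-1/(2\alpha)}}^{0}(\underline{c}u^{2\alpha}+1)^{-1}\,du$, which tends to $\tfrac{1}{2}\varepsilon^{-1+1/(2\alpha)}\underline{I}$ as $\varepsilon\to 0$ by evenness and monotone convergence. The factor $\tfrac{1}{2}$ sharpens to a full $1$ exactly when $z_0^\varepsilon\varepsilon^{-1/(2\alpha)}\to+\infty$, in which case the lower-bound integral already extends over the whole real line.

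Finally, sending $\varepsilon\to 0$ and then $\varepsilon_0\to 0$ replaces $\kappa_{c,\delta,\varepsilon_0}$ by $\kappa_{c,\delta,0}$ in the squeeze (the $C_4\varepsilon^{2\alpha-1}$ term in Lemma \ref{lem:comparetodiffeq} vanishes), while the additive $+1$ is absorbed into $o(\varepsilon^{-1+1/(2\alpha)})$, so dividing by $\varepsilon^{-1+1/(2\alpha)}$ delivers the claimed $\liminf/\limsup$ inequalities. The main obstacle is not analytic but organisational: applying Lemma \ref{lem:comparetodiffeq} at the correct index on each side of the squeeze, and tracking whether the rescaled starting point $z_0^\varepsilon\varepsilon^{-1/(2\alpha)}$ diverges (yielding the sharp $\underline{I}$) or may remain bounded (yielding only $\tfrac{1}{2}\underline{I}$).
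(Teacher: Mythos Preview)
Your proposal is correct and follows essentially the same route as the paper: sandwich $N_\delta(\varepsilon)$ between $\underline{t}(-\delta)/(1+\kappa)$ and $1+\overline{t}(-\delta)/(1-\kappa)$ via Lemma~\ref{lem:comparetodiffeq}, then rescale $u=\varepsilon^{-1/(2\alpha)}\zeta$ in the ODE integrals to extract the factor $\varepsilon^{-1+1/(2\alpha)}$ and the limiting integrals $\overline{I},\underline{I}$. One cosmetic point: there is no separate limit $\varepsilon_0\to 0$ to take---the third subscript of $\kappa$ in the paper is really $\varepsilon$ (despite the lemma's notation), and it is the single limit $\varepsilon\to 0$ that kills the $C_4\varepsilon^{2\alpha-1}$ term and yields $\kappa_{c,\delta,0}$.
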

\begin{proof}
This theorem is a direct consequence of the fact that $\zeta$ shows
an $\varepsilon^{-1+\frac{1}{2\alpha}}$ time scaling behavior. First, note that
\[
\overline{\tau}_{N_{\delta}(\varepsilon)-1}\le\overline{t}(-\delta)\le\underline{t}(-\delta)\le\underline{\ensuremath{\tau}}_{N_{\delta}(\varepsilon)}.
\]
 We will then be interested in finding $\overline{t}(-\delta),\underline{t}(-\delta)$:
\begin{eqnarray*}
t(-\delta) & = & \int\limits _{z^\varepsilon_{0}}^{-\delta}\frac{\mbox{d}z}{-cz^{2\alpha}-\varepsilon}\\
 & = & \varepsilon^{-1+\frac{1}{2\alpha}}\int\limits _{z_0^\varepsilon}^{-\delta}\frac{\varepsilon^{-\frac{1}{2\alpha}}\mbox{d}z}{-c\left(z\varepsilon^{-\frac{1}{2\alpha}}\right)^{2\alpha}-1}\\
 & = & \varepsilon^{-1+\frac{1}{2\alpha}}\int\limits _{-\varepsilon^{-\frac{1}{2\alpha}}z^\varepsilon_0}^{\varepsilon^{-\frac{1}{2\alpha}}\delta}\frac{\mbox{d}u}{cu^{2\alpha}+1},
\end{eqnarray*}
where for $\overline{t}$ one should take $c=\overline{c}$, and $c=\underline{c}$
for $\underline{t}$.

All that is left is to use Lemma \ref{lem:comparetodiffeq}, finding

\[
\frac{\underline{t}(-\delta)}{\left(1+\kappa_{\delta,\varepsilon}\right)}\le N_{\delta}(\varepsilon)\le1+\frac{\overline{t}(-\delta)}{\left(1-\kappa_{\delta,\varepsilon}\right)},
\]
which, since the integrals defining $\overline{I}$ and $\underline{I}$
converge, concludes the proof.\end{proof}
\begin{rem}
When $f_{\varepsilon}$ satisfies not only Assumption \ref{asmp:nearfixedpointexponent2},
but also
\[
f_{\varepsilon}\left(y\right)=y-c\left(y-y_{0}\right)^{2\alpha}-\varepsilon+o\left(\left(y-y_{0}\right)^{2\alpha}\right)+o\left(\varepsilon\right),
\]
we can consider $\delta_{\varepsilon}$ that goes to $0$ with $\varepsilon$,
e.g. $\frac{1}{\left|\log\varepsilon\right|}$, so that $\kappa_{\delta,0}$ will converge to $0$ as well. In this case, we may
choose $\overline{c}_{\delta}$ and $\underline{c}_{\delta}$ that converge
to $c$, and thus Theorem \ref{thm:plateaulength} will give the limit
of $\frac{N_{\delta}(\varepsilon)}{\varepsilon^{-1+\frac{1}{2\alpha}}}$,
rather than just bounds on its limsup and liminf. Such a direct application
of the theorem, however, forces us to choose an initial condition
$x_{0}^{\varepsilon}$ that converges to $y_{0}$ as $\varepsilon$
goes to $0$. To overcome this issue, we can use the estimation above
with a fixed $\delta$ until $x_{n}$ reaches $\delta_{\varepsilon}$,
which happens at $n$ of order $\int\limits _{z_{0}}^{\delta_{\varepsilon}}\frac{\mbox{d}z}{-cz^{2\alpha}-\varepsilon}\ll\varepsilon^{-1+\frac{1}{2\alpha}}$.
Then restart the dynamics using the estimation with $\delta_{\varepsilon}$
until reaching $-\delta_{\varepsilon}$, which takes an order $\varepsilon^{-1+\frac{1}{2\alpha}}$
of steps, and then using again the estimation for our fixed $\delta$
show that the number of steps required to reach $-\delta$ is much
smaller than $\varepsilon^{-1+\frac{1}{2\alpha}}$. This would yield

\[
\lim_{\varepsilon\rightarrow0}\frac{N_{\delta}(\varepsilon)}{\varepsilon^{-1+\frac{1}{2\alpha}}}=\int\limits _{-\infty}^{\infty}\frac{\mbox{d}u}{cu^{2\alpha}+1}.
\]
\end{rem}

\bibliographystyle{plain}
\bibliography{bootstrap_percalation_on_gw}

\end{document}